\newcommand{\referenza}{}
\newtheorem{thm}{Theorem}[section]
\newtheorem*{thm*}{Theorem \referenza}
\newtheorem{cor}[thm]{Corollary}
\newtheorem*{cor*}{Corollary \referenza}
\newtheorem{lem}[thm]{Lemma}
\newtheorem*{lem*}{Lemma \referenza}
\newtheorem*{prop*}{Proposition \referenza}
\newtheorem*{conj*}{Conjecture \referenza}
\theoremstyle{definition}
\newtheorem{rmk}[thm]{Remark}
\newtheorem*{rmk*}{Remark}
\theoremstyle{definition}
\newtheorem{defi}[thm]{Definition}
\DeclareMathOperator{\End}{End}
\DeclareMathOperator{\Ker}{Ker}
\DeclareMathOperator{\tr}{tr}
\DeclareMathOperator{\im}{Im}
\DeclareMathSymbol{\Finv} {\mathord}{AMSb}{"60}
\DeclareMathOperator{\GL}{GL}
\newcommand\restrict[1]{\raisebox{-.5ex}{$|$}_{#1}}
\newcommand{\R}{\mathbb{R}}
\newcommand{\C}{\mathbb{C}}
\newcommand{\Z}{\mathbb{Z}}
\newcommand{\del}{\partial}
\newcommand{\delbar}{\overline{\partial}}
\numberwithin{equation}{section}
\newcommand{\A}{\mathcal{A}}
\let\c\overline
\let\phi\varphi
\newcommand{\de}[2]{\frac{\partial #1}{\partial #2}}
\DeclareFontFamily{U}{MnSymbolC}{}
\DeclareSymbolFont{MnSyC}{U}{MnSymbolC}{m}{n}
\DeclareFontShape{U}{MnSymbolC}{m}{n}{
    <-6>  MnSymbolC5
   <6-7>  MnSymbolC6
   <7-8>  MnSymbolC7
   <8-9>  MnSymbolC8
   <9-10> MnSymbolC9
  <10-12> MnSymbolC10
  <12->   MnSymbolC12}{}
\DeclareMathSymbol{\intprod}{\mathbin}{MnSyC}{'270}
\author{Tommaso Sferruzza}
\address[Tommaso Sferruzza]{
Dipartimento di Scienze Matematiche, Fisiche e Informatiche\\
Università di Parma}
\email{tommaso.sferruzza@unipr.it}
\title{Deformations of balanced metrics}
\keywords{Balanced metrics, deformations of complex structures}
\thanks{}
\subjclass[2010]{32G05, 53B35, 53C55}
\date{\today}
\begin{document}

\begin{abstract}
Small deformations of the complex structure do not always preserve special metric properties in the Hermitian non-K\"ahler setting. In this paper, we prove necessary conditions for the existence of smooth curves of balanced metrics $\{\omega_t\}_t$ which start with a fixed balanced metric $\omega$ for $t=0$, along a differentiable family of complex manifolds $\{M_t\}_t$.
\end{abstract}

\maketitle
\section{Introduction}
By a most celebrated theorem by Kodaira and Spencer in \cite{KS60}, small deformations of compact K\"ahler manifolds, i.e., compact complex manifolds $(M,J)$ admitting an Hermitian metric $g$ with associated fundamental form $\omega$ such that $d\omega=0$, are still K\"ahler. However, similar stability results do not hold in general for metric structures which naturally arise in the Hermitian setting and satisfy weaker conditions than the K\"ahler one.

Let $(M,J)$ be a $n$-dimensional compact complex manifold. A Hermitian metric $g$ on $(M,J)$ with fundamental associated form $\omega$ is said to be \emph{balanced}, or \emph{co-K\"ahler}, if $d\omega^{n-1}=0$. In particular, by the Leibniz rule, any K\"ahler metric is trivially balanced.
In \cite{AB90}, the existence of balanced metrics is proved to be non stable under small deformations. In that work, starting with the Iwasawa manifold $\mathbb{I}_3$ endowed with a natural balanced metric, the authors provide a complex curve of complex structures $J_t$ such that $(\mathbb{I}_3,J_t)$ does not admit any balanced metric, for small $|t|\neq0$. (For other results on balanced manifolds and deformations of balanced structures, see, for example, \cite{AU, FY, OUV, ST, RWZ2}).
Therefore, it is quite natural to study under which assumptions the existence of balanced metrics is preserved on the deformations of a compact complex non-K\"ahler balanced manifold.

Balanced metrics can be defined from the point of view of the torsion and, as for the K\"ahler case as in \cite{HL}, existence of balanced metrics on a compact complex manifold $(M,J)$ can be characterized via currents, see \cite{Mic}.
Let $(M,J,g,\omega)$ be a compact Hermitian manifold and let $\nabla^C$ and $T_g$ be respectively, the Chern connection associated to $g$ and its torsion tensor. The latter can be considered a $(2,0)$-form with values in the tangent bundle $TM$, or equivalently, as a $1$-form with values in $\End(TM)$. By classical results, the torsion tensor associated to a K\"ahler metric vanishes, therefore weaker conditions have been investigated. In fact, let us consider the real $1$-form $\tau_g:=\tr(T_g)$, called \emph{torsion} $1$-\emph{form} of $g$, obtained by tracing the torsion tensor $T_g$. If the condition $\tau_g=0$ holds, then the metric $g$ is called \emph{balanced}.
 
We note that balanced manifolds are a particular case of $p$-K\"ahler manifolds, i.e., manifolds admitting a real closed $(p,p)$-form $\Omega$ such that $\Omega$ defines a volume form once restricted to any $p$-dimensional submanifold, with $1\leq p\leq n-1$, where $n$ is the complex dimension of the manifold, see \cite{AB91}. Depending on the choice of $p$, two cases can be highlighted. If $p=1$, a 1-K\"ahler manifold is K\"ahler and also $p$-K\"ahler for every $1\leq p\leq n-1$. If instead $p=n-1$, a $(n-1)$-K\"ahler manifold is balanced. In \cite{AB90}, it is shown that the property of being $p$-K\"ahler, for any $1\leq p\leq n-1$, is not stable, in general, under small deformations of the complex structure. On the other hand, suitable conditions for the stability of $p$-K\"ahlerianity are given in \cite{RWZ}. 


In complex dimension two, the notions of being balanced and K\"ahler coincide, whereas there exist $n$-dimensional compact balanced manifolds which carry no K\"ahler metric, for $n\geq 3$. This is true, for example, of certain complex solvmanifolds.
In fact, the class of compact balanced manifolds contains the K\"ahler manifolds as well as many important categories of non-K\"ahler manifolds, including, for example, 1-dimensional families of K\"ahler varieties, the twistor spaces constructed from self-dual Riemannian 4-manifolds, and complex parallelisable manifolds.

In this paper, starting with compact complex balanced manifolds $(M,J,g,\omega)$, we address the problem of finding necessary cohomological conditions on $(M,J)$ in order that there exists a curve of balanced structures $\{J_t,g_t,\omega_t\}_{t\in(-\epsilon,\epsilon)}$  with $(J_0,g_0,\omega_0 )=(J,g,\omega)$.

More precisely, we obtain the following obstruction result, see Theorem \ref{thm:main}.
\begin{thm}\label{thm:main0}
Let $(M,J)$ be a $n$-dimensional compact complex manifold endowed with a balanced metric $g$ and associated fundamental form $\omega$. Let $\{M_t\}_{t\in I}$ be a differentiable family of compact complex manifolds with $M_0=M$ and parametrized  by $\phi(t)\in\A^{0,1}(T^{1,0}(M))$, for $t\in I:=(-\epsilon,\epsilon)$, $\epsilon>0$. Let $\{\omega_t\}_{t\in I}$ be a smooth family of Hermitian metrics along $\{M_t\}_{t\in I}$, written as
\begin{equation*}
\omega_t=e^{i_{\phi(t)}|i_{\overline{\phi(t)}}}\,\,(\omega(t)),
\end{equation*}
where, locally,  $\omega(t)=\omega_{ij}(t)\, dz^i\wedge d\overline{z}^j\in\mathcal{A}^{1,1}(M)$ and $\omega_0=\omega$.

If $\omega_t^{n-1}$ has local expression $e^{i_{\phi(t)}|i_{\overline{\phi(t)}}}(\omega_{i_1j_1}(t)\dots\omega_{i_{n-1}j_{n-1}}(t)\,dz^{i_1}\wedge d\overline{z}^{j_1}\wedge\dots\wedge dz^{i_{n-1}}\wedge d\overline{z}^{j_{n-1}})$, set
\[(\omega^{n-1}(t))':=\de{}{t}(\omega_{i_1j_1}(t)\dots\omega_{i_{n-1}j_{n-1}}(t))\, dz^{i_1}\wedge d\overline{z}^{j_1}\wedge\dots dz^{i_{n-1}}\wedge d\overline{z}^{j_{n-1}}\in\A^{n-1,n-1}(M).
\]
Then, if every metric $\omega_t$ is balanced, for $t\in I$, it must hold that
\begin{equation*}
\del\circ i_{\phi'(0)} (\omega^{n-1})=-\delbar (\omega^{n-1}(0))'.
\end{equation*}
\end{thm}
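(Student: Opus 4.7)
The plan is to differentiate the balanced condition $d(\omega_t^{n-1})=0$ at $t=0$ and split the resulting identity according to the bidegree induced by the original complex structure on $M$.

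Set $\Omega(t):=\omega_{i_1j_1}(t)\cdots\omega_{i_{n-1}j_{n-1}}(t)\,dz^{i_1}\wedge d\overline{z}^{j_1}\wedge\cdots\wedge dz^{i_{n-1}}\wedge d\overline{z}^{j_{n-1}}\in\A^{n-1,n-1}(M)$, so that by hypothesis $\omega_t^{n-1}=e^{i_{\phi(t)}|i_{\overline{\phi(t)}}}\Omega(t)$, with $\Omega(0)=\omega^{n-1}$ and $\Omega'(0)=(\omega^{n-1}(0))'$. Since $\phi(0)=0$, the operator $e^{i_{\phi(0)}|i_{\overline{\phi(0)}}}$ is the identity, and expanding the exponential to first order in $\phi$ gives
\[
\frac{d}{dt}\bigg|_{t=0}\omega_t^{n-1}=i_{\phi'(0)}(\omega^{n-1})+i_{\overline{\phi'(0)}}(\omega^{n-1})+(\omega^{n-1}(0))'.
\]

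Since $\omega_t$ is balanced for every $t\in I$, we have $d\omega_t^{n-1}=0$, and differentiating this identity at $t=0$ yields
\[
d\Bigl(i_{\phi'(0)}(\omega^{n-1})+i_{\overline{\phi'(0)}}(\omega^{n-1})+(\omega^{n-1}(0))'\Bigr)=0.
\]
Now I decompose this equation by the bidegree of the original complex structure on $M$. Because $\phi'(0)\in\A^{0,1}(T^{1,0}(M))$, the interior-product operator $i_{\phi'(0)}$ has bidegree $(-1,+1)$ and $i_{\overline{\phi'(0)}}$ has bidegree $(+1,-1)$, so $i_{\phi'(0)}(\omega^{n-1})\in\A^{n-2,n}(M)$ and $i_{\overline{\phi'(0)}}(\omega^{n-1})\in\A^{n,n-2}(M)$. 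For dimension reasons, $\delbar$ of the former and $\del$ of the latter vanish identically, so only the $(n-1,n)$ and $(n,n-1)$ bidegree components of the displayed equation survive; the $(n-1,n)$ component reads
\[
\del\circ i_{\phi'(0)}(\omega^{n-1})+\delbar(\omega^{n-1}(0))'=0,
\]
which is exactly the identity to be proved. The $(n,n-1)$ component is the complex conjugate of this relation and contains no new information.

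The only technical point of this plan is to justify the first-order expansion $e^{i_{\phi(t)}|i_{\overline{\phi(t)}}}=\mathrm{Id}+i_{\phi(t)}+i_{\overline{\phi(t)}}+O(\|\phi(t)\|^2)$ directly from the definition of the operator used in the paper; once this is in hand, everything else is bookkeeping by bidegree, and no global analytic input (Hodge theory, Maurer--Cartan, etc.) is required.
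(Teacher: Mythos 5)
Your argument is correct, and it reaches the conclusion by a genuinely different (and more elementary) route than the paper. The paper starts from the equivalent condition $\delbar_t\omega_t^{n-1}=0$, invokes the Rao--Zhao formula \eqref{delbart} for $\delbar_t$ acting on $e^{i_{\phi}|i_{\overline{\phi}}}\alpha$, Taylor-expands in $t$ and differentiates at $t=0$; the terms $t\,\del(\phi'(0)\intprod\omega^{n-1})$ and $t\,\delbar(\omega^{n-1}(0))'$ come out as the first-order part of that formula (using $\del\omega^{n-1}=\delbar\omega^{n-1}=0$ to kill the zeroth-order contribution of the commutator). You instead differentiate $d\omega_t^{n-1}=0$, using only the fact that the exterior differential $d$ on the fixed underlying smooth manifold is independent of $t$ and hence commutes with $\partial/\partial t$, and then project onto the $(n-1,n)$-component for the central complex structure: the bidegree count $i_{\phi'(0)}(\omega^{n-1})\in\A^{n-2,n}(M)$, $i_{\overline{\phi'(0)}}(\omega^{n-1})\in\A^{n,n-2}(M)$ isolates exactly the claimed identity, and the $(n,n-1)$-component is indeed its conjugate since $\omega^{n-1}$ and $(\omega^{n-1}(0))'$ are real. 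This bypasses \cite[Proposition 2.13]{RZ} entirely. The one technical point you flag is easily discharged: the first-order expansion of the extension map is precisely the paper's identity \eqref{eq:Finv_exp}, $e^{i_{\phi}|i_{\overline{\phi}}}=(I+\phi+\overline{\phi})\Finv$, truncated at order one, together with the observation that contracting only the holomorphic (resp.\ antiholomorphic) factors of a monomial agrees with the operator $i_{\phi}$ (resp.\ $i_{\overline{\phi}}$) of Section \ref{notations}, because $i_V$ annihilates the $d\overline{z}^j$ and $i_{\overline{V}}$ the $dz^i$. With that line added your proof is complete; what the paper's route buys in exchange is the exact, all-orders expression for $\delbar_t\omega_t^{n-1}$, which would be the starting point for higher-order obstructions, whereas for the first-order statement of the theorem your approach is leaner.
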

The map $e^{i_{\phi(t)}|i_{\overline{\phi(t)}}}\colon\A^{p,q}(M)\rightarrow\A^{p,q}(M_t)$, for any $p,q$ and $t\in(-\epsilon,\epsilon)$, is the real linear isomorphism between the space of $(p,q)$-forms on $M$ and the space of $(p,q)$-forms on $M_t$, referred to as \emph{extension map}, which was introduced in \cite{RZ} and we recall in section \ref{deformations}. Also, we denote by $i_{\psi}$ the interior product, or contraction, between $(p,q)$-forms and any $\psi\in \mathcal{A}^{0,1}(T^{1,0}(M))$, i.e.,  any $(0,1)$-differential form $\psi$ with values in $T^{1,0}(M)$; see section \ref{notations} for its definition.

As a direct consequence, Theorem \ref{thm:main0} yields the following obstruction regarding the Dolbeault cohomology group of $(M,J)$ of bi-degree $(n-1,n)$, see Corollary \ref{cor:main}.
\begin{cor}\label{cor:main0}
Let $(M,J)$ be a compact Hermitian manifold endowed with a balanced metric $g$ and associated fundamental form $\omega$. If there exists a smooth family of balanced metrics which coincides with $\omega$ in $t=0$, along the family of deformations $\{M_t\}_t$ with $M_0=M$ and parametrized by the $(0,1)$-vector form $\phi(t)$ on $M$, then the following equation must hold
\[
\left[\del\circ i_{\phi'(0)} (\omega^{n-1})\right]_{H_{\delbar}^{n-1,n}(M)}=0.
\]
\end{cor}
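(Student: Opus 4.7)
The plan is to deduce Corollary \ref{cor:main0} directly from Theorem \ref{thm:main0}, which carries all the analytic content. The hypotheses of the corollary match exactly those of the theorem: a balanced starting metric $\omega=\omega_0$ on the central fibre $(M,J)$ and a smooth family $\{\omega_t\}_t$ of balanced Hermitian metrics along the differentiable family $\{M_t\}_t$ parametrized by $\phi(t)$. Invoking Theorem \ref{thm:main0}, one immediately obtains the identity
\[
\del \circ i_{\phi'(0)}(\omega^{n-1}) \;=\; -\delbar\, (\omega^{n-1}(0))'
\]
as an equality of $(n-1,n)$-forms on $M$.

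The key observation I would then make is that, by the explicit local description given in the statement of Theorem \ref{thm:main0}, $(\omega^{n-1}(0))'$ is a genuine element of $\A^{n-1,n-1}(M)$, written in local coordinates on $(M,J)$ rather than on some nearby fibre $M_t$. Consequently the right-hand side of the above identity is a $\delbar$-exact $(n-1,n)$-form on the \emph{fixed} complex manifold $(M,J)$. The identity therefore forces the left-hand side $\del\circ i_{\phi'(0)}(\omega^{n-1})$ to be $\delbar$-exact as well, and in particular $\delbar$-closed, so that it represents a well-defined class in $H^{n-1,n}_{\delbar}(M)$. Since that class coincides with the class of a $\delbar$-exact form, it vanishes, which is the asserted
\[
\bigl[\del \circ i_{\phi'(0)}(\omega^{n-1})\bigr]_{H^{n-1,n}_{\delbar}(M)}=0.
\]

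There is no real obstacle: the corollary is essentially a cohomological repackaging of Theorem \ref{thm:main0}. The only minor checks are verifying that the bidegrees line up — $i_{\phi'(0)}$ sends $\A^{n-1,n-1}(M)$ to $\A^{n-2,n}(M)$, after which $\del$ lands in $\A^{n-1,n}(M)$, matching $\delbar(\omega^{n-1}(0))'$ — and that $(\omega^{n-1}(0))'$ is read as a form on $M$, both of which are transparent from the setup.
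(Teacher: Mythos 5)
Your proposal is correct and matches the paper's treatment: the paper states the corollary as a direct consequence of Theorem \ref{thm:main}, and your argument --- substituting the identity $\del\circ i_{\phi'(0)}(\omega^{n-1})=-\delbar(\omega^{n-1}(0))'$ and noting that the right-hand side is $\delbar$-exact on the fixed fibre $(M,J)$, so the class in $H^{n-1,n}_{\delbar}(M)$ vanishes --- is exactly the intended deduction. The bidegree check ($i_{\phi'(0)}\colon\A^{n-1,n-1}\to\A^{n-2,n}$, then $\del$ into $\A^{n-1,n}$) is also correct.
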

We remark that our results concern the existence of smooth families of balanced metrics $\{\omega_t\}_{t}$ along the differentiable family of complex manifold $\{M_t\}_{t}$, whereas classical stability concerns the general existence of balanced metrics on the family $\{M_t\}_{t}$.

In the proof of Theorem \ref{thm:main0}, we use formulas as in \cite{RZ}, for the action of the differentials $\del_t$ and $\delbar_t$ on $(p,q)$-forms on any element $M_t$ of a differentiable family of complex manifolds $\{M_t\}_t$, relying only on the complex differentials $\del_0=\del$ and $\delbar_0=\delbar$ of a fixed fiber $M_{0}$, and on the $(0,1)$-differential form with values in $T^{1,0}(M_0)$ which parametrizes the diffentiable family  $\{M_t\}_{t}$.
We point out that, in a similar manner, necessary conditions for the existence of smooth families of metrics along differentiable families of deformations can be proved for other special Hermitian metrics besides balanced ones, for example SKT metrics, as done in \cite{PS21}.

This note is organized as follows. In section \ref{notations}, we set the basic notions and definitions on complex manifolds which will be useful throughout the paper. In section \ref{deformations}, following the classical approach in \cite{KM}, we recall the results regarding existence and parametrization of deformations via a smooth $(0,1)$-form with values in the holomorphic tangent bundle, and the extension map mentioned above. In section \ref{main}, we recollect the formulas for the action of the complex differentials $\del_t$ and $\delbar_t$ on $(p,q)$-forms on a differentiable family of complex manifolds $\{M_t\}_t$, as introduced in \cite{RZ}, and we prove the main theorem. In section \ref{applications}, we apply Theorem \ref{thm:main0} and Corollary \ref{cor:main0}, yielding two examples, one for each family of $3$-dimensional complex parallelisable non-K\"ahler solvamanifolds (as classified in \cite{Nak}), namely, the complex parallelisable Nakamura manifold and the Iwasawa manifold. In particular, for each example we characterize, in terms of the parameters of the space of Kuranishi, smooth curves of deformations for which Corollary \ref{cor:main} provides obstructions.

\medskip\medskip
\noindent{\em Acknowledgments.} The author would like to sincerely thank Adriano Tomassini, both for his constant support and encouragement, and for many useful discussions and suggestions.

\section{Notations}\label{notations}
Let $(M,J,g,\omega)$ be a Hermitian manifold, i.e., a complex manifold $M$ endowed with an integrable almost-complex structure $J\in\End(TM)$ and a Hermitian metric $g$ on $M$ whose associated fundamental form $\omega$ is given by $\omega(X,Y)=g(JX,Y)$, for $X,Y\in TM$. Let $\dim_{\C}M=n.$

The complex structure $J$ induces the decomposition $\mathcal{A}_{\C}^k(M)=\oplus_{p+q=k}\mathcal{A}^{p,q}(M)$ on the spaces of complex $k$-differential forms into $(p,q)$-forms, for any $k\geqslant0$, so that the exterior differential $d$ acting on $(p,q)$-forms can be written as $d=\del+\delbar$, where $\del$ and $\delbar$ are the projections of $d(\mathcal{A}^{p,q}(M))$ onto, respectively, the spaces $\mathcal{A}^{p+1,q}(M)$ and $\mathcal{A}^{p,q+1}(M)$. 
With respect to this decomposition, the fundamental form $\omega$ is a $(1,1)$-form, i.e., $\omega\in\mathcal{A}^{1,1}(M).$

The Hermitian metric $g$ on $M$ is said to be \emph{balanced} if $d\omega^{n-1}=0.$
Since $\omega$ is a real, i.e., $\c{\omega}=\omega$, it can be easily seen that $g$ is balanced if, and only if, $\del\omega^{n-1}=0$, if, and only if, $\delbar\omega^{n-1}=0.$

 
If $\pi\colon E\rightarrow M$ is a complex vector bundle of rank $r$ over the $n$-dimensional compact Hermitian manifold $(M,J,g,\omega)$, we denote by $\bigwedge^{p,q}(M,E):=\bigwedge^{p,q}(M)\otimes E$ the bundle of the $(p,q)$-differential forms on $M$ with values in $E$ and by $\mathcal{A}^{p,q}(M,E):=\Gamma(M,\bigwedge^{p,q}(M,E))$ the space of its global smooth sections.

Let $\ast$ be the $\C$-antilinear Hodge operator on $(M,J,g,\omega)$ with respect to $g$. If $h$ is a Hermitian metric on the complex vector bundle $\pi\colon  E\rightarrow M$,  i.e., a smooth Hermitian scalar product on each fiber of $E$, we identify $h$ as a $\C$-antilinear isomorphism between $E$ and its dual $E^*$, see, for example, \cite{Huy04}. Then, we can consider the $\C$-antilinear Hodge $\ast_{E}$-operator
\begin{gather*}
{\ast}_E\colon\textstyle\mathcal{A}^{p,q}(M,E)\rightarrow\mathcal{A}^{n-p,n-q}(M,E^{\ast}),\\
{\ast}_E(\alpha\otimes s):=\ast({\alpha})\otimes h(s),\nonumber
\end{gather*}
for any $\alpha\otimes s\in\textstyle\mathcal{A}^{p,q}(M,E)$.
We recall the action of the Dolbeault operator $\delbar_E$ on $\mathcal{A}^{p,q}(M,E)$
\begin{equation}\label{eq:dbarE}
\delbar_E(\psi):=\sum\delbar(\alpha_i)\otimes s_i,
\end{equation}
for any $\psi\in\mathcal{A}^{p,q}(M,E)$, locally written as $\psi=\sum \alpha_i\otimes s_i$, with $\alpha_i\in\mathcal{A}^{p,q}(M)$ and $(s_1\dots,s_r)$ a local trivialization of $E$. The Dolbeault cohomology with values in the complex vector bundle $E$ is then defined as
\begin{equation*}
H^{p,q}_{\delbar_E}(M,E):=\displaystyle\frac{\Ker(\delbar_E\colon\mathcal{A}^{p,q}(M,E)\rightarrow\mathcal{A}^{p,q+1}(M,E))}{\im(\delbar_E\colon\mathcal{A}^{p,q-1}(M,E)\rightarrow\mathcal{A}^{p,q}(M,E))}.
\end{equation*}
The ${\ast}_E$-operator allows us to express $\delbar_E^*$ as
\begin{equation}\label{eq:dbar*E}
\delbar_E^{\ast}:=-{\ast}_{E^*}\circ\delbar_{E^*}\circ{\ast}_E
\end{equation}
and hence, the second order elliptic Laplacian operator
\begin{gather*}
\Delta_E:=\delbar_E^{\ast}\delbar_E+\delbar_E\delbar_E^{\ast}
\end{gather*}
and its harmonic forms
\begin{gather*}
\mathcal{H}^{p,q}(M,E):=\{\,\beta\in\textstyle\mathcal{A}^{p,q}(M,E):\Delta_E(\beta)=0\,\}.
\end{gather*}
The hermitian metric $h$  on $E$ induces the following Hermitian product on every $\mathcal{A}^{p,q}(M,E)$
\begin{equation*}
\llangle\alpha,\beta\rrangle:=\int_M h(\alpha,\beta)\ast 1.
\end{equation*}
With respect to $\llangle\cdot,\cdot\rrangle$, the operator $\delbar_E^*$ is the adjoint of $\delbar_E$ and the operator $\Delta_E$ is self-adjoint. We recall that Hodge theory yields the following decompositions for the spaces $\mathcal{A}^{p,q}(M,E)$
\begin{gather*}
\textstyle\mathcal{A}^{p,q}(M,E)=\mathcal{H}^{p,q}(M,E)\oplus \Delta_E(\mathcal{A}^{p,q}(M,E)).
\end{gather*}
Moreover, each space $\mathcal{H}^{p,q}(M,E)$ is finite-dimensional and projects bijectively onto the corresponding $H_{\delbar_E}^{p,q}(M,E)$, which also is finite-dimensional.

When the context is clear, we will omit the dependance on the vector bundle $E$ and on the manifold $M$ in the symbols for, respectively, operators and spaces of sections of differential forms with values in a vector bundle, i.e., for example, $\delbar$ and $\mathcal{A}^{p,q}(E)$.

We will refer to elements of $\A^{0,q}(T^{1,0}M)$ as \emph{$(0,q)$-vector forms} (on $M$). If $\phi=\beta\otimes V$ is a $(0,1)$-vector form, i.e, $\beta\in\A^{0,1}M$, $V\in T^{1,0}M$, we define the contraction by $\phi$ as the linear map
\begin{gather*}
i_{\phi}\colon\A^{p,q}(E)\rightarrow\A^{p-1,q+1}(E)\\
i_{\phi}(\alpha\otimes s):=\beta\wedge i_{V}(\alpha)\otimes s,
\end{gather*}
where $i_V(\alpha)$ is the usual interior product of a vector field and a $(p,q)$-differential form. 
In an analogous manner, we define $i_{\c\phi}(\alpha\otimes s):=\c\beta\wedge i_{\c{V}}(\alpha)\otimes s$ for the conjugate $\c\phi=\c\beta\otimes\c{V}$. For a $(0,1)$-vector form $\phi=\beta\otimes V$, we also define the contraction with $(0,1)$-vectof fields
\begin{gather*}
i_{\phi}\colon\Gamma(T^{0,1}M)\rightarrow\Gamma(T^{1,0}M)\\
i_{\phi}W:=\beta(W)V,
\end{gather*}
and we set $i_{\c\varphi}\c{W}:=\c\beta(\c{W})\c{V}$.
We will denote the map $i_{\phi}$ also by the symbol $\phi\intprod$.

\section{Preliminaries of deformation theory}\label{deformations}
For a more comprehensive view, we recollect the fundamental facts of deformation theory on compact complex manifolds.

Let us consider a domain $B$ of $\R^m$ (resp. of $\C^m$) and $\{M_t\}_{t\in B}$ a family of compact complex manifolds.

\begin{defi}\label{def:def}
We say that $\{M_t\}_{t\in B}$ is a \emph{differentiable} (resp. \emph{holomorphic}, or \emph{complex analytic}) \emph{family} 
if there exist a differentiable (resp. complex) manifold $\mathcal{M}$ and a differentiable (resp. holomorphic) map $\pi$ from $\mathcal{M}$ onto $B$ which is proper and such that:
\begin{enumerate}
\item each fiber $\pi^{-1}(t)=M_t$ as complex manifolds, for every $t\in B$,
\item the rank of the Jacobian of $\pi$ coincides with the dimension (resp. complex dimension) of $B$, at each point of $\mathcal{M}$.\label{fam_def}
\end{enumerate}
\end{defi}
We note that from (\ref{fam_def}) of the definition, for  every $t\in B$, the fiber $\pi^{-1}(t)$ is a submanifold (resp. complex submanifold) of $\mathcal{M}$.
(We will adopt also the notation $(\mathcal{M},\pi,B)$ to denote the differentiable (resp. complex analytic) family $\{M_{t}\}_{t\in B}$.)

If $\{M_t\}_{t\in B}$ is a differentiable family of complex manifolds,
by a classical results of Ehresmann, see \cite{E47} or \cite[Proposition 6.2.2]{Huy04}, as a differentiable manifold, $\mathcal{M}$ can be regarded as the product
\begin{equation}\label{eq:ehr_thm}
\mathcal{M}\simeq M_{t_0}\times B,
\end{equation}
for a fixed $t_0\in B$; the manifold $M_{t_0}$ is referred to as the \emph{central fiber}. For the sake of simplicity, in what follows we will assume $t_0=0$ and $B=B(0,1)\subset\R^m$, i.e., $B=\{t\in\R^m : |t|<1 \}$.

Let $(\mathcal{M},\pi,B)$ be a differentiable family of compact complex manifolds. We can consider a system of local coordinates $\{\mathcal{U}_j,(\zeta_j,t)\}$ of $\mathcal{M}$, such that each $\mathcal{U}_j\simeq U_j\times B$, where
\begin{equation*}
U_j=\{(\zeta_j(p)): |\zeta_j(p)|<1\}.
\end{equation*}
The transition functions $f_{jk}$  identify points in $\mathcal{U}_j\cap\mathcal{U}_k\neq\emptyset$ by $\zeta_k=f_{jk}(\zeta_j,t)$.
By (\ref{eq:ehr_thm}), we can describe local coordinates of $\mathcal{U}_j$ as differentiable functions of coordinates of the central fiber $M_0=\pi^{-1}(0)$
\begin{equation}\label{eq:z_zeta}
\zeta_j=\zeta_j(\mathbf{z},t),
\end{equation}
where $\mathbf{z}=(z_j)_j$ are local coordinates on $M_0$. We note that each $\zeta_j(\mathbf{z},t)$ is a differentiable function of $(\mathbf{z},t)$, whereas, it depends holomorphically on $\mathbf{z}$ once $t$ is fixed.

Via coordinate expressions (\ref{eq:z_zeta}) and transition functions $f_{jk}$, in \cite[Chapter 4, Proposition 1.2]{KM}, Kodaira proves that the complex structure on each $M_t$, $t\in B$, can be parametrized by means of a smooth $(0,1)$-vector form $\phi(t)$ on $M_0$. More precisely, with respect to such $\phi(t)$,  the (local) holomorphic functions on each $M_t$ are defined as the differentiable complex valued functions $f$ on open sets of $M_0$ which satisfy
\begin{equation*}
\left(\delbar-i_{\phi(t)}\right)f(z,t)=0,
\end{equation*}
i.e., the holomorphic coordinates and, therefore, the complex structure on each $M_t$, for $t$ small enough, are determined by the $(0,1)$-vector form $\phi(t)$ on $M_0$.

Furthermore, once a suitable bracket is defined on the spaces $\mathcal{A}^{0,q}(T^{1,0}M_0)$ (as in, for example, \cite[Chapter 6, Section 1]{Huy04}), the deformations of the complex structure on a compact complex manifold can be represented  according to the following theorem, see \cite[Chapter 4, Theorem 1.1]{KM}.
\begin{thm}\label{thm:Kod-psi}
If $(\mathcal{M},\pi,B)$ is a differentiable family of compact complex manifolds, then the complex structure on each $M_t=\pi^{-1}(t)$ is represented by a $(0,1)$-vector form $\phi(t)$ on $M_0$, such that $\phi(0)=0$ and
\begin{equation}\label{eq:MC-eq}
\delbar\phi(t)-\frac{1}{2}[\phi(t),\phi(t)]=0\qquad\textit{(Maurer-Cartan equation).}
\end{equation}
\end{thm}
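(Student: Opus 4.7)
The plan is to follow the classical Kodaira--Morrow approach, in which the deformation $M_t$ of the complex structure is encoded as a perturbation of the $(0,1)$-tangent distribution inside the fixed complexified tangent bundle of $M_0$, and integrability is translated into the Maurer--Cartan equation.

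First, by Ehresmann's fibration theorem already invoked in \eqref{eq:ehr_thm}, the total space $\mathcal{M}$ is diffeomorphic to $M_0\times B$. This identification makes every fiber $M_t$ a copy of the \emph{same} underlying smooth manifold $M_0$, endowed with a varying integrable almost complex structure $J_t$, with $J_0=J$. Equivalently, $M_t$ is determined by the eigenbundle $T^{0,1}M_t\subset T_{\C}M_0$ of $J_t$ for the eigenvalue $-i$. Since $T^{0,1}M_t$ depends continuously on $t$ and coincides with $T^{0,1}M_0$ at $t=0$, for $|t|$ small it remains transverse to $T^{1,0}M_0$ and hence projects isomorphically onto $T^{0,1}M_0$ along $T^{1,0}M_0$. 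Consequently $T^{0,1}M_t$ is the graph of a uniquely determined bundle map $T^{0,1}M_0\to T^{1,0}M_0$, i.e.\ an element $\phi(t)\in\mathcal{A}^{0,1}(T^{1,0}M_0)$, with $\phi(0)=0$. Concretely, using the local coordinate description \eqref{eq:z_zeta}, one writes a local frame of $T^{0,1}M_t$ in the form
\begin{equation*}
\frac{\partial}{\partial \bar{\zeta}^i} \;=\; \sum_k A_{\bar i}^{\bar k}(t)\Bigl(\frac{\partial}{\partial \bar z^k}+\phi(t)^{m}_{\bar k}\frac{\partial}{\partial z^m}\Bigr),
\end{equation*}
where the matrix $A(t)$ is invertible for small $t$ (so that the spanned distribution is the same as $\{\partial/\partial\bar z^k+\phi(t)^m_{\bar k}\partial/\partial z^m\}$), and the components $\phi(t)^m_{\bar k}$ are smooth in $(\mathbf{z},t)$ and holomorphic in nothing beyond being $C^\infty$. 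Smooth dependence of $\phi$ on $t$ then follows from the smooth dependence of $\zeta_j$ on $(\mathbf{z},t)$. This step also yields the promised characterization: a function $f$ is holomorphic on $M_t$ iff it is annihilated by every section of $T^{0,1}M_t$, i.e.\ iff $(\bar\partial-i_{\phi(t)})f=0$.

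The remaining, and main, task is to derive the Maurer--Cartan equation \eqref{eq:MC-eq} from the integrability of $J_t$. By the Newlander--Nirenberg theorem, integrability is equivalent to the involutivity of $T^{0,1}M_t$ under the Lie bracket of vector fields. Taking two local sections $\bar V+\phi(t)(\bar V)$ and $\bar W+\phi(t)(\bar W)$ of $T^{0,1}M_t$ with $\bar V,\bar W\in\Gamma(T^{0,1}M_0)$ and expanding
\begin{equation*}
\bigl[\bar V+\phi(t)\bar V,\;\bar W+\phi(t)\bar W\bigr]
\end{equation*}
by bilinearity of the Lie bracket produces four terms: $[\bar V,\bar W]$ lies in $T^{0,1}M_0$; the two mixed terms reassemble, after antisymmetrization, into $\bar\partial\phi(t)$ applied to $(\bar V,\bar W)$; and the last term $[\phi(t)\bar V,\phi(t)\bar W]$ gives the Frölicher--Nijenhuis bracket $\tfrac{1}{2}[\phi(t),\phi(t)]$. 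Requiring this bracket to again lie in the graph of $\phi(t)$---that is, requiring its $T^{1,0}M_0$-component to equal $\phi(t)$ applied to its $T^{0,1}M_0$-component---and then equating the obstruction to zero yields precisely
\begin{equation*}
\bar\partial\phi(t)-\tfrac{1}{2}[\phi(t),\phi(t)]=0.
\end{equation*}

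The main obstacle is this final computation: the bookkeeping of the Lie brackets of the graph sections, the correct identification of the mixed terms with $\bar\partial\phi(t)$ in the sense of the $\bar\partial$-operator with values in $T^{1,0}M_0$ (as in \eqref{eq:dbarE}), and of the self-bracket term with the Frölicher--Nijenhuis bracket $\tfrac12[\phi(t),\phi(t)]$, must all be carried out with consistent sign conventions. Once this translation between involutivity and Maurer--Cartan is set up, the theorem follows at once, since $J_t$ is integrable for every $t\in B$ by hypothesis.
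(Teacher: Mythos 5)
Your proposal is correct and follows exactly the classical Kodaira--Morrow argument that the paper itself relies on: the paper gives no proof of this theorem, citing \cite[Chapter 4, Theorem 1.1]{KM} (see also \cite[Chapter 6]{Huy04}), and your route --- Ehresmann trivialization, describing $T^{0,1}M_t$ as the graph of $\phi(t)\in\mathcal{A}^{0,1}(T^{1,0}M_0)$ for small $t$, and translating involutivity of $T^{0,1}M_t$ into the Maurer--Cartan equation --- is precisely that standard one. The only part left schematic is the final bracket bookkeeping identifying the mixed terms with $\delbar\phi(t)$ and the self-bracket with $\tfrac12[\phi(t),\phi(t)]$, which is routine once the bracket convention of \cite[Chapter 6, Section 1]{Huy04} is fixed.
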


A more general theory, known as \emph{Kuranishi} theory, assures that deformations exist on any compact complex manifold. We recall the fundamental result.

Let $(M,J,g)$ be a compact Hermitian manifold. For the spaces $\mathcal{A}^{0,q}(T^{1,0}(M))$, by Hodge theory there exist the direct sum decompositions
\begin{equation}
\mathcal{A}^{0,q}(T^{1,0}(M))=\mathcal{H}^{0,q}(M,T^{1,0}(M))\oplus\Delta_{T^{1,0}}\left(\mathcal{A}^{0,q}(T^{1,0}(M))\right),
\end{equation}
where $\Delta_{T^{1,0}(M)}$ is the Laplacian operator acting on $\A^{0,q}(T^{1,0}(M))$ and $\mathcal{H}^{0,q}(M,T^{1,0}(M))$
is the space of $\Delta_{T^{1,0}(M)}$-harmonic $(0,q)$-vector forms on $M$.

With respect to this decomposition, both the projection
\[
G\colon\mathcal{A}^{0,q}(T^{1,0}(M))\rightarrow
\Delta_{T^{1,0}(M)}\left(\mathcal{A}^{0,q}(T^{1,0}(M))\right),
\]
known as the \emph{Green operator}, and the projection map
\[
H\colon\mathcal{A}^{0,q}(T^{1,0}(M))\rightarrow \mathcal{H}^{0,q}(M,T^{1,0}(M)),
\]
are well defined, see \cite[Chapter 4, Lemma 2.1]{KM}.

\begin{thm}[Kuranishi]\label{thm-Kur}
Let $M$ be a compact complex manifold and let $\{\eta_{i}\}$ be a base for the space $\mathcal{H}^{0,1}(M,T^{1,0}(M))$. Let $\phi(t)\in\mathcal{A}^{0,1}(T^{1,0}(M))$ be a power series solution of the equation
\begin{equation}\label{eq:thm-Kur}
\phi(t)=\eta(t)+\frac{1}{2}\delbar^* G[\phi(t),\phi(t)],
\end{equation}
where $\eta(t)=\sum_{i=1}^m t_{i}\eta_{i}$, $t=(t_1,\dots,t_m)\in B_r(0)\subset\C^m$, $r>0$, and let $S=\{t\in B_r(0): H[\phi(t),\phi(t)]=0\}$. 
Then for each $t\in S$, $\phi(t)$ determines a complex structure $M_t$ on the differentiable manifold underlying $M$.
\end{thm}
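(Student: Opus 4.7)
The plan is to construct $\phi(t)$ as a convergent power series solution of (\ref{eq:thm-Kur}) and then to verify that whenever $t$ lies in $S$ the resulting form satisfies the Maurer--Cartan equation (\ref{eq:MC-eq}), so that Theorem \ref{thm:Kod-psi} produces the required complex structure $M_t$.

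First I would solve (\ref{eq:thm-Kur}) formally. Writing $\phi(t) = \sum_{\nu \geq 1} \phi_\nu(t)$ with $\phi_\nu$ a homogeneous polynomial of degree $\nu$ in $t = (t_1,\dots,t_m)$ having coefficients in $\A^{0,1}(T^{1,0}(M))$, and matching homogeneous parts in (\ref{eq:thm-Kur}), one reads off $\phi_1 = \eta(t)$ together with the recursion
\[
\phi_\nu = \tfrac{1}{2}\,\delbar^{\ast} G \sum_{\mu + \lambda = \nu}[\phi_\mu,\phi_\lambda] \qquad (\nu \geq 2),
\]
which determines each $\phi_\nu$ uniquely. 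To establish convergence on some ball $B_r(0)$ I would invoke the classical Kodaira--Nirenberg--Spencer majorant argument in a H\"older space $C^{k,\alpha}$: the operator $\delbar^{\ast} G$ is pseudodifferential of order $-1$ and therefore gains one derivative, which exactly compensates the derivative lost in the bilinear bracket $[\cdot,\cdot]$. An induction then yields estimates of the shape $\|\phi_\nu\|_{k,\alpha} \leq C\, b^\nu/\nu^2$ after a suitable rescaling in $t$, and standard elliptic regularity for $\delbar^{\ast} G$ upgrades the sum to a smooth family $\phi(t)$ on $M$. This convergence step is the main technical obstacle.

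Next, for $t \in S$ inside the ball of convergence I would show (\ref{eq:MC-eq}) holds. Set $\psi(t) := \delbar \phi(t) - \tfrac{1}{2}[\phi(t),\phi(t)]$. Applying $\delbar$ to (\ref{eq:thm-Kur}), using $\delbar \eta(t) = 0$ (since each $\eta_i$ is $\delbar$-harmonic) and the Hodge decomposition $\mathrm{Id} = H + \delbar\,\delbar^{\ast} G + \delbar^{\ast} \delbar\, G$ on $\A^{0,2}(T^{1,0}(M))$, one obtains
\[
\psi(t) \;=\; -\tfrac{1}{2} H[\phi(t),\phi(t)] \;-\; \tfrac{1}{2}\,\delbar^{\ast} G\,\delbar[\phi(t),\phi(t)].
\]
The first summand vanishes by the defining condition of $S$. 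For the second, the graded Leibniz rule for the bracket gives $\delbar[\phi,\phi] = 2[\delbar \phi,\phi] = 2[\psi,\phi] + [[\phi,\phi],\phi]$, while the graded Jacobi identity on odd elements yields $[[\phi,\phi],\phi] = 0$. Hence $\psi$ satisfies the homogeneous linear equation $\psi = -\delbar^{\ast} G[\psi,\phi]$; since $\delbar^{\ast} G$ is bounded in $C^{k,\alpha}$ and $\|\phi(t)\|_{k,\alpha}$ is small for small $|t|$, this is a contraction, forcing $\psi(t) \equiv 0$. Thus $\phi(t)$ satisfies (\ref{eq:MC-eq}), and Theorem \ref{thm:Kod-psi}, combined with the Newlander--Nirenberg integrability theorem implicit therein, delivers the complex structure $M_t$ on the underlying differentiable manifold of $M$.
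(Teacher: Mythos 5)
The paper does not actually prove this theorem: it is recalled as a classical result and attributed to Kuranishi (with \cite{Kur65} and \cite{KM} as references), so there is no in-paper argument to compare against. Your proposal reproduces the standard Kuranishi construction correctly: the formal power-series recursion $\phi_1=\eta(t)$, $\phi_\nu=\tfrac12\delbar^*G\sum_{\mu+\lambda=\nu}[\phi_\mu,\phi_\lambda]$, the Kodaira--Nirenberg--Spencer majorant/H\"older estimate for convergence, and then the key algebraic step showing that $\psi:=\delbar\phi-\tfrac12[\phi,\phi]$ satisfies $\psi=-\tfrac12H[\phi,\phi]-\delbar^*G[\psi,\phi]$ via the Hodge decomposition, the graded Leibniz rule and the Jacobi identity $[[\phi,\phi],\phi]=0$, so that $H[\phi,\phi]=0$ forces $\psi=0$ by a contraction argument; Newlander--Nirenberg then yields the integrable structure $M_t$. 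The only place where you are relying on an assertion rather than an argument is the convergence estimate $\|\phi_\nu\|_{k,\alpha}\le C\,b^\nu/\nu^2$, which you flag as the main technical obstacle but do not carry out; in a self-contained write-up that induction (and the elliptic regularity bootstrap from $C^{k,\alpha}$ to $C^\infty$) would need to be supplied, exactly as in Kodaira--Morrow. As a blind reconstruction of the cited classical proof, it is sound.
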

The space of parameters $S$, referred to as the \emph{space of Kuranishi}, 
in general can have singularities and, hence, it may not have a structure of smooth manifold. However, the family $\{M_t\}_{t\in S}$ can still be regarded as a complex analytic family, as proved in \cite{Kur65}.

For a differentiable family of compact complex manifolds, it is useful to understand the decompositions of the complexified cotangent bundle and its powers on each fiber of the family (see also \cite{Tu} for a generalization of such decompositions).

Let us suppose that a differentiable family $(\mathcal{M},\pi,B)$ is parametrized by the $(0,1)$-vector form $\phi(t)$ on the central fiber $M_0=:M$. For the sake of semplicity, we suppose that $B\subset\R$, i.e., $B=(-\epsilon,\epsilon)$, for $\epsilon>0$.

Let $i_{\phi(t)}^k$ be the interation of $\phi(t)$ for $k$ times
and $\overline{\phi(t)}\in\A^{1,0}(T^{0,1}M)$ be the conjugate of $\phi(t)$. Then, the following operators
\begin{equation*}
e^{i_{\phi(t)}}:=\sum_{k=0}^{\infty}\frac{1}{k!}i_{\phi(t)}^k, \qquad\qquad\qquad e^{i_{\c{\phi(t)}}}:=\sum_{k=0}^{\infty}\frac{1}{k!}i_{\overline{\phi(t)}}^k
\end{equation*}
are well-defined since each summation is finite, being $\dim_{\C}M$ finite.

As in \cite[Definition 2.8]{RZ}, for any $\alpha\in\mathcal{A}^{p,q}(M)$ with local expression $\alpha=\alpha_{i_1\dots i_p j_1\dots j_q}dz^{i_1}\wedge\dots\wedge dz^{i_p}\wedge d\overline{z}^{j_1}\wedge\dots\wedge d\overline{z}^{j_q}$, the \emph{extension map} is defined as
\begin{equation}\label{def-exp}
e^{i_{\phi(t)}|i_{\overline{\phi(t)}}}(\alpha):=\alpha_{i_1\dots i_p j_1\dots j_q}e^{i_{\phi(t)}}(dz^{i_1}\wedge\dots\wedge dz^{i_p})\wedge e^{i_{\overline{\phi(t)}}}(d\overline{z}^{j_1}\wedge\dots\wedge d\overline{z}^{j_q}).
\end{equation}
Such map allows to estabilish a correspondence between the $(p,q)$-forms on the central fiber $M$ and on any $M_t$, as proved in the following lemma, see \cite[Lemma 2.9, 2.10]{RZ}.
\begin{lem}\label{thm-exp}
For any $p,q$, and for $t$ small, the extension map $e^{i_{\phi(t)}|i_{\overline{\phi(t)}}}\colon\A^{p,q}(M)\rightarrow\A^{p,q}(M_t)$ is a real linear isomorphism.
\end{lem}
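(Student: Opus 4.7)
The proof has three parts: verifying that the extension map respects wedge products, that its image lies in the correct bidegree space on $M_t$, and that it is a real linear isomorphism for $t$ small.

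First, I would verify that $i_{\phi(t)}$ is an even derivation on the complexified exterior algebra of $T^{\ast}M$. Writing $\phi(t)$ locally as $\phi(t) = \c\eta^{\,k}\otimes V_k$ with $\c\eta^{\,k}\in\A^{0,1}(M)$ and $V_k\in T^{1,0}(M)$, one has $i_{\phi(t)} = \c\eta^{\,k}\wedge i_{V_k}$, the composition of a degree-$1$ exterior product with a degree-$(-1)$ odd derivation, hence a degree-$0$ derivation. A direct check gives
\[
i_{\phi(t)}(\alpha\wedge\beta) = i_{\phi(t)}(\alpha)\wedge\beta + \alpha\wedge i_{\phi(t)}(\beta),
\]
so the formal exponential $e^{i_{\phi(t)}}$ is an algebra automorphism, and analogously for $e^{i_{\c{\phi(t)}}}$. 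This reduces the entire problem to understanding what these operators do on single $(1,0)$- and $(0,1)$-forms.

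Second, I would identify $e^{i_{\phi(t)}}(\alpha)$ with a $J_t$-$(1,0)$-form for $\alpha\in\A^{1,0}(M)$. From the paper's description of $M_t$ via $(\delbar - i_{\phi(t)})f=0$, the antiholomorphic tangent bundle $T^{0,1}(M_t)$ is the graph in $T_{\C}M = T^{1,0}(M)\oplus T^{0,1}(M)$ of the bundle map $T^{0,1}(M)\to T^{1,0}(M)$ induced by $\phi(t)$ (with the sign dictated by that equation). Evaluating $dz^i + i_{\phi(t)}(dz^i)$ on an arbitrary element of this graph gives zero by direct computation, so $e^{i_{\phi(t)}}(dz^i) = dz^i + i_{\phi(t)}(dz^i)\in\A^{1,0}(M_t)$; conjugation yields $e^{i_{\c{\phi(t)}}}(d\c z^j)\in\A^{0,1}(M_t)$. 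By the derivation/exponential property from the first step, $e^{i_{\phi(t)}}(dz^{i_1}\wedge\cdots\wedge dz^{i_p})$ is a wedge of $J_t$-$(1,0)$-forms, hence lies in $\A^{p,0}(M_t)$, and similarly for the antiholomorphic factor. Multiplying and using $\C$-linearity in the coefficient $\alpha_{i_1\dots i_p j_1\dots j_q}$ shows $e^{i_{\phi(t)}|i_{\c{\phi(t)}}}$ maps $\A^{p,q}(M)$ into $\A^{p,q}(M_t)$.

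Third, real linearity follows from the identity $\c{e^{i_{\phi(t)}}(dz^i)} = e^{i_{\c{\phi(t)}}}(d\c z^i)$, immediate from the definitions, which ensures that the image of a real form is real. For bijectivity, note that at each point $p\in M$ the $2n$ covectors
\[
\{dz^i + i_{\phi(t)}(dz^i)\}_{i=1}^n\;\cup\;\{d\c z^j + i_{\c{\phi(t)}}(d\c z^j)\}_{j=1}^n
\]
differ from $\{dz^i\}\cup\{d\c z^j\}$ by terms of order $|\phi(t)|$. Since $\phi(0)=0$ and $M$ is compact, for $|t|$ small enough these forms remain pointwise a basis of $T^{\ast}_{\C,p}M$, so the induced maps on each exterior power $\bigwedge^{p,q}$ (in the new bigrading on $M_t$) are pointwise isomorphisms, smooth in $t$. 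The main obstacle is essentially bookkeeping: ensuring the sign conventions align so that $e^{i_{\phi(t)}}(dz^i)$ is genuinely a $(1,0)$-form with respect to $J_t$ rather than a form of mixed bidegree with respect to $J_t$; once this is pinned down against the defining equation $(\delbar - i_{\phi(t)})f=0$, the rest is a formal perturbation argument.
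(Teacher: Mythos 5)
The paper does not give its own proof of this lemma; it only cites \cite[Lemmas 2.9 and 2.10]{RZ}, and your argument is a correct, self-contained version of the standard proof given there: $i_{\phi(t)}$ is a degree-zero derivation (so $e^{i_{\phi(t)}}$ is multiplicative and, since $i_{\phi(t)}$ kills $(0,1)$-forms, $e^{i_{\phi(t)}}(dz^i)=dz^i+i_{\phi(t)}(dz^i)$); this covector annihilates $T^{0,1}(M_t)$, the graph over $T^{0,1}(M)$ determined by $(\delbar-i_{\phi(t)})f=0$, hence lies in $\A^{1,0}(M_t)$; and for $|t|$ small, by compactness of $M$, the perturbed covectors remain a pointwise coframe adapted to the bigrading of $M_t$, which gives bijectivity onto $\A^{p,q}(M_t)$. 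The only delicate point is the sign convention identifying $T^{0,1}(M_t)$ as the graph of $-\phi(t)$ rather than $+\phi(t)$, which you correctly flag and which is fixed exactly by the defining equation you quote.
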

Moreover, each space of complex $k$-differential forms can be decomposed as
\begin{equation}\label{eq:decomp_t}
\A_{\C}^k(M)=\oplus_{p+q=k}\A^{p,q}(M_t),\qquad \text{for}\,\,\,k\in\{1,\dots,n\}.
\end{equation}

\section{Main Theorem}\label{main}
Let $\{M_t\}_{t\in I}$ be a differentiable family of compact complex manifolds parametrized by a $(0,1)$-vector form $\phi(t)$ on $M:=M_{0}$,  where we assume $t\in I$, $I:=(-\epsilon,\epsilon)$, $\epsilon>0$.
On each fiber $M_t$ of $\{M_t\}_{t\in I}$, accordingly to decompositions (\ref{eq:decomp_t}), the differential operators $\del_t$ and $\delbar_t$ are, by definition,
\begin{gather*}
\del_t:=\pi_t^{p+1,q}\circ d\colon \A^{p,q}(M_t)\rightarrow\A^{p+1,q}(M_t),\\
\delbar_t:=\pi_t^{p,q+1}\circ d\colon \A^{p,q}(M_t)\rightarrow\A^{p,q+1}(M_t),
\end{gather*}
where $\pi_t^{p+1,q}$ and $\pi_t^{p,q+1}$ are the projections of $d(\A^{p,q}(M_t))$ onto, respectively, $\A^{p+1,q}(M_t)$ and $\A^{p,q+1}(M_t)$, for any $p$, $q$.

Following \cite[Proposition 2.7]{RZ}, we have the formulas for $\del_t$ and $\delbar_t$ acting on any differentiable complex function $f$ on $M_t$
\begin{gather*}
\del_t f=e^{i_{\phi}}\Big((I-\phi\overline{\phi})^{-1}\intprod(\del-\overline{\phi}\intprod\delbar)f\Big),\\
\delbar_t f=e^{i_{\overline{\phi}}}\Big((I-\overline{\phi}\phi)^{-1}\intprod(\delbar-\phi\intprod\del)f \Big),
\end{gather*}
where we use the abbreviations $\phi\overline{\phi}:=\overline{\phi}\intprod\phi$,  $\overline{\phi}\phi:=\phi\intprod\overline{\phi}$ and $\phi:=\phi(t)$, whereas the action of $\del_t$ and $\delbar_t$ on $(p,q)$-differential forms $e^{i_{\phi}|i_{\overline{\phi}}}\alpha\in\A^{p,q}(M_t)$, with $\alpha\in\A^{p,q}(M)$, is proved in \cite[Proposition 2.13]{RZ} to be
\begin{align}
\del_t(e^{i_{\phi}|i_{\overline{\phi}}}\alpha)&=e^{i_{\phi}|i_{\overline{\phi}}}\Big((I-\phi\overline{\phi})^{-1}\Finv([\delbar,i_{\overline{\phi}}]+\del)(I-\phi\overline{\phi})\Finv \alpha\Big)
,\label{delt}\\
\delbar_t(e^{i_{\phi}|i_{\overline{\phi}}}\alpha)&=e^{i_{\phi}|i_{\overline{\phi}}}\Big((I-\overline{\phi}\phi)^{-1}\Finv([\del,i_{\phi}]+\delbar)(I-\overline{\phi}\phi)\Finv\alpha\Big)
.\label{delbart}
\end{align}
Let now $(M,J,g,\omega)$ be a compact Hermitian manifold of complex dimension $n$ endowed with a balanced metric $g$, i.e., $\delbar\omega^{n-1}=0$ and let $\{M_t\}_{t\in I}$ be a differentiable family  of deformations such that $M_0=M$, with $\{M_t\}_{t\in I}$ parametrized by a $(0,1)-$vector form $\phi(t)$ on $M$, for $t\in I=(-\epsilon,\epsilon)$, $\epsilon>0$. Let also $\{\omega_t\}_{t\in I}$ be a family of Hermitian metrics on $\{M_t\}_{t\in I}$, such that $\omega_0=\omega$ and we suppose the metrics $g_t$ to be balanced, i.e.,
\begin{equation}\label{eq-bal_t}
\delbar_t\omega_t^{n-1}=0, \quad\forall t\in I.
\end{equation}
We remark that, by Lemma \ref{thm-exp}, we can write each $\omega_t$ as $e^{i_{\phi}|i_{\overline{\phi}}}(\omega(t))$, where locally $\omega(t)=\omega_{ij}(t)dz^i\wedge d\overline{z}^j$. In particular, by definition of $e^{i_{\phi}|i_{\overline{\phi}}}$, it is easy to check that
\begin{align*}
\omega_t^{n-1}&=(e^{i_{\phi}|i_{\overline{\phi}}}(\omega(t)))^{n-1}=e^{i_{\phi}|i_{\overline{\phi}}}(\omega^{n-1}(t))\\
&=e^{i_{\phi}|i_{\overline{\phi}}}(f_v(t)\,dz^{i_1}\wedge d\overline{z}^{j_1}\wedge\dots\wedge dz^{i_{n-1}}\wedge d\overline{z}^{j_{n-1}}),
\end{align*}
where we denote $f_v(t):=\omega_{i_1j_1}(t)\dots \omega_{i_{n-1}j_{n-1}}(t)$, with $v=(i_1,j_1,\dots,i_{n-1},j_{n-1})$ and $i_k,j_k\in\{1,\dots,n\}$, $k=\{1,\dots,n-1\}$. 

We can then apply formula (\ref{delbart}) to (\ref{eq-bal_t}) and, by making use of Taylor series expansion and differentiating with respect to $t$ in $t=0$,  we are able to prove the main theorem.

As a final remark before the theorem, we note that, for any $(p,q)$-differential form $\alpha$ on $M$, locally written as $\alpha=\alpha_{i_1\dots i_p j_1\dots j_q}dz^{i_1}\wedge\dots\wedge dz^{i_p}\wedge d\overline{z}^{j_1}\wedge\dots\wedge d\overline{z}^{j_q}$, the simultaneous contraction, which we will denote by $\Finv$, on each form component of $\alpha$, i.e.,
\begin{align*}
\phi\Finv\alpha:=
\alpha_{i_1\dots i_p j_1\dots j_q}\phi\intprod dz^{i_1}\wedge \dots\wedge \phi\intprod dz^{i_p}\wedge \c\phi\intprod\overline{z}^{j_1}\wedge \dots \wedge \c\phi\intprod d\overline{z}^{j_q}.
\end{align*}
is a well-defined operator and the extension map $e^{i_\phi|i_{\overline{\phi}}}$ associated to a $(0,1)$-vector form $\phi(t)$, can be written, in terms of $\Finv$, as
\begin{equation}\label{eq:Finv_exp}
e^{i_{\phi}|i_{\overline{\phi}}}=(I+\phi+\overline{\phi})\Finv.
\end{equation}

\begin{thm}\label{thm:main}
Let $(M,J)$ be a $n$-dimensional compact complex manifold endowed with a balanced metric $g$ and associated fundamental form $\omega$. Let $\{M_t\}_{t\in I}$ be a differentiable family of compact complex manifolds with $M_0=M$ and parametrized  by $\phi(t)\in\A^{0,1}(T^{1,0}(M))$, for $t\in I:=(-\epsilon,\epsilon)$, $\epsilon>0$. Let $\{\omega_t\}_{t\in I}$ be a smooth family of Hermitian metrics along $\{M_t\}_{t\in I}$, written as
\begin{equation*}
\omega_t=e^{i_{\phi}|i_{\overline{\phi}}}\,\,(\omega(t)),
\end{equation*}
where, locally,  $\omega(t)=\omega_{ij}(t)\, dz^i\wedge d\overline{z}^j\in\mathcal{A}^{1,1}(M)$ and $\omega_0=\omega$.

If $\omega_t^{n-1}$ has local expression $e^{i_{\phi}|i_{\overline{\phi}}}(\omega_{i_1j_1}(t)\dots\omega_{i_{n-1}j_{n-1}}(t)\,dz^{i_1}\wedge d\overline{z}^{j_1}\wedge\dots\wedge dz^{i_{n-1}}\wedge d\overline{z}^{j_{n-1}})$, set
\[(\omega^{n-1}(t))':=\de{}{t}(\omega_{i_1j_1}(t)\dots\omega_{i_{n-1}j_{n-1}}(t))\, dz^{i_1}\wedge d\overline{z}^{j_1}\wedge\dots dz^{i_{n-1}}\wedge d\overline{z}^{j_{n-1}}\in\A^{n-1,n-1}(M).
\]
Then, if every metric $\omega_t$ is balanced, for $t\in I$, it must hold that
\begin{equation*}
\del\circ i_{\phi'(0)} (\omega^{n-1})=-\delbar (\omega^{n-1}(0))'.
\end{equation*}
\end{thm}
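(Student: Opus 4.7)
The plan is to apply the formula (\ref{delbart}) for $\delbar_t$ directly to $\omega_t^{n-1}=e^{i_\phi|i_{\overline{\phi}}}(\omega^{n-1}(t))$ and then differentiate the resulting identity at $t=0$. Since $e^{i_{\phi}|i_{\overline{\phi}}}\colon\mathcal{A}^{n-1,n}(M)\rightarrow\mathcal{A}^{n-1,n}(M_t)$ is a real linear isomorphism by Lemma \ref{thm-exp}, the balanced condition $\delbar_t\omega_t^{n-1}=0$ for every $t\in I$ is equivalent to
\begin{equation*}
F(t):=(I-\overline{\phi}\phi)^{-1}\Finv\bigl([\del,i_{\phi}]+\delbar\bigr)(I-\overline{\phi}\phi)\Finv\omega^{n-1}(t)=0\qquad\forall t\in I.
\end{equation*}
At $t=0$ one has $\phi(0)=0$ and $F(0)=\delbar\omega^{n-1}=0$ is simply the hypothesis that $\omega$ is balanced; the genuine first-order obstruction comes from $F'(0)=0$.

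To compute $F'(0)$ I would apply the Leibniz rule to the four $t$-dependent pieces $(I-\overline{\phi}\phi)^{-1}\Finv$, $[\del,i_{\phi}]+\delbar$, $(I-\overline{\phi}\phi)\Finv$ and $\omega^{n-1}(t)$. The key simplification is that $\overline{\phi}\phi=\phi\intprod\overline{\phi}$ is quadratic in $\phi$ and $\phi(0)=0$, hence
\begin{equation*}
\frac{d}{dt}\Big|_{t=0}(I-\overline{\phi}\phi)^{\pm 1}\Finv=0,
\end{equation*}
so both outer factors contribute nothing to $F'(0)$ (their values at $t=0$ reduce to the identity). The derivative of the middle bracket at $t=0$ is $[\del,i_{\phi'(0)}]=\del\circ i_{\phi'(0)}-i_{\phi'(0)}\circ\del$, and the derivative of $\omega^{n-1}(t)$ at $t=0$ is by definition $(\omega^{n-1}(0))'$.

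Collecting the surviving pieces yields
\begin{equation*}
F'(0)=\del\,i_{\phi'(0)}\omega^{n-1}-i_{\phi'(0)}\del\omega^{n-1}+\delbar(\omega^{n-1}(0))'.
\end{equation*}
Since $\omega^{n-1}$ is real and $d\omega^{n-1}=0$, one has $\del\omega^{n-1}=0=\delbar\omega^{n-1}$, killing the middle term. The identity $F'(0)=0$ then becomes exactly
\begin{equation*}
\del\circ i_{\phi'(0)}(\omega^{n-1})=-\delbar(\omega^{n-1}(0))',
\end{equation*}
as claimed. The main technical point to verify carefully is the vanishing $\frac{d}{dt}\big|_{t=0}(I-\overline{\phi}\phi)^{-1}\Finv=0$ (together with the analogous statement for $(I-\overline{\phi}\phi)\Finv$), which follows by expanding the inverse as a geometric series and using $\phi(0)=0$, but requires keeping track of the simultaneous contraction $\Finv$ as recalled in \eqref{eq:Finv_exp}; everything else is a clean Leibniz computation.
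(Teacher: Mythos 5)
Your proposal is correct and follows essentially the same route as the paper: apply formula (\ref{delbart}) to the balanced condition, expand to first order in $t$ using $\phi(t)=t\phi'(0)+o(t)$ and $(I-\overline{\phi}\phi)^{\pm1}=I+o(t)$, and use $\del\omega^{n-1}=\delbar\omega^{n-1}=0$ to kill the term $i_{\phi'(0)}\del\omega^{n-1}$. The only cosmetic difference is that you divide out the extension map via Lemma \ref{thm-exp} before differentiating, whereas the paper keeps it, writes it as $(I+\phi+\overline{\phi})\Finv$ via \eqref{eq:Finv_exp}, and observes it contributes nothing at first order.
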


Given Theorem \ref{thm:main}, it is straightforward to see that the following cohomological obstruction holds.
\begin{cor}\label{cor:main}
Let $(M,J)$ be a compact Hermitian manifold endowed with a balanced metric $g$ and associated fundamental form $\omega$. If there exists a smooth family of balanced metrics which coincides with $\omega$ in $t=0$, along the family of deformations $\{M_t\}_t$ with $M_0=M$ and parametrized by the $(0,1)$-vector form $\phi(t)$ on $M$, then the following equation must hold
\[
\left[\del\circ i_{\phi'(0)} (\omega^{n-1})\right]_{H_{\delbar}^{n-1,n}(M)}=0.
\]
\end{cor}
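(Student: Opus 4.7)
The plan is to differentiate the balanced identity $\delbar_t\omega_t^{n-1}=0$ at $t=0$ using the explicit formula (\ref{delbart}) for $\delbar_t$ acting on forms in $\A^{p,q}(M_t)$, pulled back to $M$ via the extension map.

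First I would apply (\ref{delbart}) with $\alpha=\omega^{n-1}(t)\in\A^{n-1,n-1}(M)$, noting that $\omega_t^{n-1}=e^{i_{\phi}|i_{\overline{\phi}}}(\omega^{n-1}(t))$. Since Lemma \ref{thm-exp} tells us that $e^{i_{\phi}|i_{\overline{\phi}}}$ is a real linear isomorphism for $t$ small, the balanced condition is equivalent to
\begin{equation*}
(I-\overline{\phi}\phi)^{-1}\Finv\big([\del,i_{\phi}]+\delbar\big)(I-\overline{\phi}\phi)\Finv\omega^{n-1}(t)=0\qquad\text{for all } t\in I.
\end{equation*}

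The central step is to differentiate this identity in $t$ and evaluate at $t=0$, exploiting $\phi(0)=0$. This forces $\overline{\phi}\phi=\phi\intprod\overline{\phi}$ to vanish to second order in $t$, so both operators $(I-\overline{\phi}\phi)\Finv$ and $(I-\overline{\phi}\phi)^{-1}\Finv$ reduce to the identity at $t=0$ and have vanishing first derivatives there. Likewise $[\del,i_{\phi(t)}]$ vanishes at $t=0$ with first derivative $[\del,i_{\phi'(0)}]$. By the Leibniz rule, the derivative at $t=0$ collapses to
\begin{equation*}
[\del,i_{\phi'(0)}]\omega^{n-1}+\delbar(\omega^{n-1}(0))'=0,
\end{equation*}
where $(\omega^{n-1}(0))'$ is exactly the time-derivative of the coefficient function $f_v(t)$ defined in the statement, arising from differentiating the innermost $\omega^{n-1}(t)$ at $t=0$.

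To finish, I would invoke the balanced hypothesis on $M$: from $d\omega^{n-1}=0$ and the pure bidegree of $\omega^{n-1}\in\A^{n-1,n-1}(M)$, both $\del\omega^{n-1}$ and $\delbar\omega^{n-1}$ vanish separately. In particular $i_{\phi'(0)}\del\omega^{n-1}=0$, so the graded commutator simplifies to $[\del,i_{\phi'(0)}]\omega^{n-1}=\del\circ i_{\phi'(0)}(\omega^{n-1})$, and rearranging gives $\del\circ i_{\phi'(0)}(\omega^{n-1})=-\delbar(\omega^{n-1}(0))'$, as claimed. The main obstacle I anticipate is the careful Taylor bookkeeping at $t=0$: one must verify that at first order in $t$ all contributions from the factors $(I-\overline{\phi}\phi)^{\pm 1}\Finv$ genuinely drop out — which hinges on $\phi(0)=0$ forcing $\overline{\phi}\phi=O(t^2)$ — so that only the two displayed terms survive. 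Once this is established, the reduction of the commutator via $\del\omega^{n-1}=0$ is immediate.
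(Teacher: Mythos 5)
Your argument is correct and follows essentially the same route as the paper: you reprove Theorem \ref{thm:main} by Taylor-expanding formula (\ref{delbart}) at $t=0$ (your explicit handling of the commutator term via $\del\omega^{n-1}=0$ is a point the paper leaves implicit), and the corollary then follows because the resulting identity exhibits $\del\circ i_{\phi'(0)}(\omega^{n-1})=-\delbar(\omega^{n-1}(0))'$ as $\delbar$-exact. Do add the one-line conclusion explicitly — the corollary asserts the vanishing of the class in $H_{\delbar}^{n-1,n}(M)$, not the identity itself.
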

\begin{proof}[Proof of Theorem \ref{thm:main}]
The metrics $\omega_t$ are balanced for every $t\in I$, i.e., $\delbar_t\omega_t^{n-1}=0$. By means of the extension map, this equation can be written as
\begin{equation*}
\delbar_t\left(e^{i_{\phi}|i_{\overline{\phi}}}(\omega^{n-1}(t))\right)=0,\quad\forall t\in I.
\end{equation*}
Also, formula (\ref{delbart}) implies that $\delbar_t\omega_t^{n-1}=0$ for every $t\in I$ if and only if
\begin{equation*}
e^{i_{\phi}|i_{\overline{\phi}}}\Big((I-\overline{\phi}\phi)^{-1}\Finv([\del,i_{\phi}]+\delbar)(I-\overline{\phi}\phi)\Finv(\omega^{n-1}(t))\Big)=0,\quad\forall t\in I.
\end{equation*}
We now use equation (\ref{eq:Finv_exp}) and we develop in Taylor series expansion centered in $t=0$ the term $\delbar_t\omega_t^{n-1}$, noting that
\begin{equation*}
\phi=\phi(t)=t\phi'(0)+o(t)
\end{equation*}
and, therefore,
\begin{equation*}
(I-\phi\overline{\phi})=(I-\overline{\phi}\phi)=(I-\phi\overline{\phi})^{-1}=(I-\overline{\phi}\phi)^{-1}=I+o(t),
\end{equation*}
to obtain
\begin{align*}
\delbar_t\omega_t^{n-1}&=\left(I+t\phi'(0)\intprod+t\overline{\phi'(0)}\intprod\right)\Finv\left(([\del,t\phi'(0)\intprod]+\delbar)(\omega^{n-1}(0)+t(\omega^{n-1}(0))')\right)+o(t)\\
&=\left(I+t\phi'(0)\intprod+t\overline{\phi'(0)}\intprod\right)\Finv\left(t\del(\phi'(0)\intprod \omega^{n-1}(0))+t\delbar(\omega^{n-1}(0))') \right)+o(t)\\
&=t\del(\phi'(0)\intprod \omega^{n-1}(0))+t\delbar(\omega^{n-1}(0))') +o(t)
\end{align*}
Since $\delbar_t\omega_t^{n-1}=0$ holds true for every $t\in I$, if we differentiate it with respect to $t$ in $t=0$, we obtain
\begin{equation*}\label{eq:thm_1}
\frac{\del}{\del t}\restrict{t=0}\left(\delbar_t\omega_t^{n-1}\right)=\frac{\del}{\del t}\restrict{t=0}\left[t\del(\phi'(0)\intprod \omega^{n-1}(0))+t\delbar(\omega^{n-1}(0))') +o(t)\right]=0.
\end{equation*}
Hence,
\begin{equation*}
\del(\phi'(0)\intprod \omega^{n-1})+\delbar(\omega^{n-1}(0))'=0,
\end{equation*}
therefore concluding the proof.
\end{proof}
\section{Applications}\label{applications}
In this section, we apply Theorem \ref{thm:main} and Corollary \ref{cor:main} to find obstructions on each family of non-K\"ahler complex parallelisable solvmanifolds as characterized in \cite{Nak}. In particular, we will focus on the complex parallelisable Nakamura manifold and the Iwasawa manifold. In the examples, we will refer to differentiable families over a real interval $I$ as \emph{curves of deformations}.

\subsection{Example 1}{\em (Complex parallelisable Nakamura manifold)}.
Let $G:=\C\ltimes_{\gamma}\C^2$ be the complex Lie group given by the action of $\C$ on $\C^2$, via
\begin{equation}
\gamma(z)=\begin{pmatrix}
e^z & 0\\
0 & e^{-z}
\end{pmatrix}.
\end{equation}
Let us consider the discrete subgroup $\Gamma$ of $G$ of the form $\Gamma:=\left(\Z(a+ib)+\Z(c+id)\right)\ltimes_{\gamma}\Gamma''$, where
\begin{itemize}
\item  the set $\Gamma''$ is a lattice of $\C^2$;
\item the complex numbers $a+ib$ and $c+id$  are such that $\Z(a+ib)+\Z(c+id)$ is a lattice in $\C$;
\item the matrices $\gamma(a+ib)$ and $\gamma(c+id)$ are conjugates in $SL(4;\Z)$, where we regard $SL(2;\C)\subset SL(4;\R)$.
\end{itemize}
Then $\Gamma$ is a lattice of $G$ and the compact quotient $M:=\Gamma/G$ is called the \emph{complex parallelisable Nakamura Manifold}, see \cite[Section 2]{Nak} for details on its construction.

It is well known that $G$ is a solvable non nilpotent Lie group, therefore the quotient $M$ is a $3$-dimensional solvamanifold, which is biholomorphic to $\C^3$.

If $\{z^1\}$ and $\{z^2,z^3\}$ are the standard coordinates on respectively $\C$ and $\C^2$, a left-invariant frame of $(1,0)$-vector fields on $G$ is given by $\{Z_1,Z_2,Z_3\}$, where
\begin{align*}
\begin{cases}
Z_1=\frac{\del}{\del z^1}\\
Z_2=e^{z^1}\frac{\del}{\del z^1}\\
Z_3=e^{-z^1}\frac{\del}{\del z^3}
\end{cases}
\end{align*}
and the dual coframe of $(1,0)$-differential forms in $\mathcal{A}^{1,0}(M)$ is given by $\{\eta^1,\eta^2,\eta^3\}$, where
\begin{align*}
\begin{cases}
\eta^1=dz^1\\
\eta^2=e^{-z^1}dz^2\\
\eta^3=e^{z^1}dz^3.
\end{cases}
\end{align*}
Note that structure equations
\begin{align}
\begin{cases}\label{Nak:struct_eq}
d\eta^1=0\\
d\eta^2=-\eta^1\wedge\eta^2\\
d\eta^3=\eta^1\wedge\eta^3.
\end{cases}
\end{align}
imply that the coframe of left-invariant $(1,0)$-forms $\{\eta^1,\eta^2,\eta^3\}$ induce an almost-complex left-invariant structure $J$ on $M$, which is integrable.

From now on, we adopt the abbreviation for the wedge product of differential forms, i.e., for example, $\eta^{ij\overline{k}}:=\eta^i\wedge\eta^j\wedge\overline{\eta}^k$.

Let us consider a generic left-invariant Hermitian metric $g$ on $(M,J)$, with associated fundamental form $\omega$ given by
\begin{equation*}
\omega=\frac{i}{2}\sum_{j=1}^3\alpha_{j\overline{j}}\eta^{j\overline{j}}+\frac{1}{2}\sum_{j<k}(\alpha_{j\overline{k}}-\overline{\alpha}_{j\overline{k}})\eta^{j\overline{k}},
\end{equation*}
with coefficients $\alpha_{j\overline{k}}\in\C$, for $j,k\in\{1,2,3\}$, such that the matrix representing $g$
\begin{equation*}
\begin{pmatrix}
\alpha_{1\overline{1}} & -i\alpha_{1\overline{2}} & -i\alpha_{1\overline{3}} \\
i\overline{\alpha}_{1\overline{2}} & \alpha_{2\overline{2}} & -i\alpha_{2\overline{3}} \\
i\overline{\alpha}_{1\overline{3}} & i\overline{\alpha}_{2\overline{3}} & \alpha_{3\overline{3}}
\end{pmatrix}
\end{equation*}
is positive definite.
From structure equations (\ref{Nak:struct_eq}), it is easy to check that $\delbar\omega^2=0$, hence any left-invariant Hermitian metric on $(M,J)$ is balanced.

We notice that the dimension of the space $H_{\delbar}^{0,1}(M)$ depends on the choice of the lattice $\Gamma=\left(\Z(a+ib)+\Z(c+id)\right)\ltimes_{\gamma}\Gamma''$, in particular on the choice of the real numbers $b$ and $d$. More accurately, it can be proved that, if $b,d\in2\pi\Z$, then $\dim H_{\delbar}^{0,1}(M)=3$, whereas, if either $b\notin 2\pi\Z$ or $d\notin 2\pi\Z$, then $\dim H_{\delbar}^{0,1}(M)=1$, see \cite{Kas}. Hence, we distinguish two cases.

\subsubsection{Case $(i)$: $b,d\in2\pi\Z$}
We define the following $\C$-base for $\mathcal{A}^{0,1}(M)$, consisting of the left-invariant $(0,1)$-forms $\{\tilde{\eta}^1,\tilde{\eta}^2,\tilde{\eta}^3\}$, defined as
\begin{align*}
\begin{cases}
\tilde{\eta}^1:=\overline{\eta}^1\\
\tilde{\eta}^2:=e^{\overline{z}^1-z^1}\overline{\eta}^2\\
\tilde{\eta}^3:=e^{z^1-\overline{z}^1}\overline{\eta}^3,
\end{cases}
\end{align*}
where the functions $e^{\overline{z}^1-z^1}$ and $e^{z^1-\overline{z}^1}$ are well defined on $M$ because of the choice of the lattice $\Gamma$.


Accordingly to \cite[Section 3]{Nak}, small deformations of $(M,J)$ can be characterized by means of the $(0,1)$-vector form
\begin{equation}
\phi(\mathbf{t})=\sum_{i,j=1}^3 t_{ij}\tilde{\eta}^j\otimes Z_i,
\end{equation}
with the coefficients of $\mathbf{t}=(t_{11},t_{12},t_{13},t_{21},t_{22},t_{23},t_{31},t_{32},t_{33})\in B(0,\delta)\subset\C^9$, $\delta>0$, belonging to one of the following classes:
\begin{align}
t_{11}\neq 0,t_{12}=t_{13}=t_{23}=t_{32}&=0;\label{Nak:Kur_1}\\
t_{11}=t_{22}=t_{33}&=0;\label{Nak:Kur_2}\\
t_{12}\neq 0, t_{11}=t_{13}=t_{21}=t_{23}=t_{31}&=0;\label{Nak:Kur_3}\\
t_{13}\neq0, t_{11}=t_{12}=t_{21}=t_{31}=t_{32}&=0.\label{Nak:Kur_4}
\end{align}
We can now make use of Theorem \ref{thm:main} and Corollary \ref{cor:main} to find obstruction for each class of small deformations of $(M,J)$.

\emph{Class} $(\ref{Nak:Kur_1})$.
In this case, the $(0,1)$-vector form parametrizing the deformation is
\begin{equation*}
\phi(\mathbf{t})=t_{11} \tilde{\eta}^1\otimes Z_1+t_{21}\tilde{\eta}^1\otimes Z_2 + t_{22}\tilde{\eta}^2\otimes Z_2 + t_{31}\tilde{\eta}^1\otimes Z_3 + t_{33}\tilde{\eta}^3\otimes Z_3, 
\end{equation*}
for $\mathbf{t}=(t_{11},t_{21},t_{22},t_{31},t_{33})\in B(0,\delta)\subset\C^5$, $\delta>0$.
We then consider the smooth curve of deformations
\begin{align*}
t\mapsto \phi(t):= t\,(a_{11} \tilde{\eta}^1\otimes Z_1+a_{21}\tilde{\eta}^1\otimes Z_2 + a_{22}\tilde{\eta}^2\otimes Z_2+ a_{31}\tilde{\eta}^1\otimes Z_3 + a_{33}\tilde{\eta}^3\otimes Z_3)\in\mathcal{A}^{0,1}(T^{1,0}(M))
\end{align*}
for $t\in I=(-\epsilon,\epsilon)$, $ \epsilon>0$, $(a_{11},a_{21},a_{22},a_{31},a_{33})\in\C^5$, whose derivative in $t=0$ is
\begin{equation*}
\phi'(0)=a_{11} \tilde{\eta}^1\otimes Z_1+a_{21}\tilde{\eta}^1\otimes Z_2 + a_{22}\tilde{\eta}^2\otimes Z_2 + a_{31}\tilde{\eta}^1\otimes Z_3 + a_{33}\tilde{\eta}^3\otimes Z_3.
\end{equation*}
With the aid of (\ref{Nak:struct_eq}), we compute
\begin{align*}
\del\circ i_{\phi'(0)}(\omega^2)&=[a_{12}(i\alpha_{1\overline{1}}\alpha_{2\overline{3}}+\overline{\alpha}_{1\c{2}}\alpha_{1\c{3}})+a_{32}(i\alpha_{3\overline{3}}\overline{\alpha}_{1\c{2}}-\c{\alpha}_{1\c{3}}\alpha_{2\c{3}})]e^{\c{z}^1-z^1}\eta^{12}\wedge\eta^{\tilde{1}\tilde{2}\tilde{3}}\\
&+\frac{1}{2}\left[a_{11}(i\alpha_{2\c{2}}\alpha_{1\c{3}}-\alpha_{1\c{2}}\alpha_{2\c{3}})+a_{31}(|\alpha_{2\c{3}}|^2-\alpha_{2\c{2}}\alpha_{3\c{3}})\right]\eta^{12}\wedge\eta^{\tilde{1}\tilde{2}\tilde{3}}\\
&+[a_{13}(\alpha_{1\c{2}}\c{\alpha}_{1\c{3}}-i\alpha_{1\c{3}}\c{\alpha}_{2\c{3}})+a_{23}(i\alpha_{2\c{2}}\c{\alpha}_{1\c{3}}+\c{\alpha}_{1\c{2}}\c{\alpha}_{2\c{3}})]e^{z^1-\c{z}^1}\eta^{13}\wedge\eta^{\tilde{1}\tilde{2}\tilde{3}}\\
&+\frac{1}{2}[a_{11}(i\alpha_{1\c{1}}\alpha_{1\c{2}}+\alpha_{1\c{3}}\c{\alpha}_{2\c{3}})+a_{21}(|\alpha_{2\c{3}}|^2-\alpha_{2\c{2}}\alpha_{3\c{3}})]\eta^{13}\wedge\eta^{\tilde{1}\tilde{2}\tilde{3}}.
\end{align*}
We note that the forms $e^{\c{z}^1-z^1}\eta^{12}\wedge\eta^{\tilde{1}\tilde{2}\tilde{3}}$  and $e^{z^1-\c{z}^1}\eta^{13}\wedge\eta^{\tilde{1}\tilde{2}\tilde{3}}$ are $\delbar$-exact. In fact,
\begin{align*}
&e^{\c{z}^1-z^1}\eta^{12}\wedge\eta^{\tilde{1}\tilde{2}\tilde{3}}=\delbar(e^{\c{z}^1-z^1}\eta^{12}\wedge\tilde{\eta}^{23})\\
&e^{z^1-\c{z}^1}\eta^{13}\wedge\eta^{\tilde{1}\tilde{2}\tilde{3}}=\delbar(-e^{z^1-\c{z}^1}\eta^{13}\wedge\tilde{\eta}^{23}),
\end{align*}
therefore they both represent a vanishing class in $H_{\delbar}^{2,3}(M)$. On the other hand, it can be easily shown that the forms $\eta^{12}\wedge\eta^{\tilde{1}\tilde{2}\tilde{3}}$ and $\eta^{13}\wedge\eta^{\tilde{1}\tilde{2}\tilde{3}}$ are harmonic with respect to the Dolbeault Laplacian operator, i.e., they belong to $\mathcal{H}_{\delbar}^{2,3}(M,g)$. As a consequence, they correspond, respectively, to non-vanishing cohomology classes $[\eta^{12}\wedge\eta^{\tilde{1}\tilde{2}\tilde{3}}]_{\delbar}$ and $[\eta^{13}\wedge\eta^{\tilde{1}\tilde{2}\tilde{3}}]_{\delbar}$ in $H_{\delbar}^{2,3}(X).$ Hence, by Corollary \ref{cor:main}, if one of the following equations does not hold
\begin{align*}
\begin{cases}
a_{11}(i\alpha_{1\c{1}}\alpha_{1\c{2}}+\alpha_{1\c{3}}\c{\alpha}_{2\c{3}})+a_{21}(|\alpha_{2\c{3}}|^2-\alpha_{2\c{2}}\alpha_{3\c{3}})=0\\
a_{11}(i\alpha_{2\c{2}}\alpha_{1\c{3}}-\alpha_{1\c{2}}\alpha_{2\c{3}})+a_{31}(|\alpha_{2\c{3}}|^2-\alpha_{2\c{2}}\alpha_{3\c{3}})=0,
\end{cases}
\end{align*}
there exists no curve of balanced metrics $\{\omega_t\}_{t\in I}$ such that $\omega_0=\omega$ along the curve of deformations $t\mapsto\phi(t)$.

\emph{Class} $(\ref{Nak:Kur_2})$.
The deformation is parametrized by the $(0,1)$-vector form $\phi(\mathbf{t})$, with 
\begin{equation*}
\phi(\mathbf{t})=t_{12}\tilde{\eta}^2\otimes Z_1+t_{13}\tilde{\eta}^3\otimes Z_1+t_{21}\tilde{\eta}^1\otimes Z_2+t_{23}\tilde{\eta}^3\otimes Z_2+t_{31}\tilde{\eta}^1\otimes Z_3+t_{32}\tilde{\eta}^2\otimes Z_3,
\end{equation*}
with $\mathbf{t}=(t_{12},t_{13},t_{21},t_{23},t_{31},t_{32})\in B(0,\delta)\subset \C^6$, $\delta>0$.

We consider the smooth curve of deformations
\begin{align*}
t\mapsto \phi(t):=&t(a_{12}\tilde{\eta}^2\otimes Z_1+a_{13}\tilde{\eta}^3\otimes Z_1+a_{21}\tilde{\eta}^1\otimes Z_2\\
&+a_{23}\tilde{\eta}^3\otimes Z_2+a_{31}\tilde{\eta}^1\otimes Z_3+a_{32}\tilde{\eta}^2\otimes Z_3),
\end{align*}
for $t\in I=(-\epsilon,\epsilon), \epsilon>0$, whose derivative in $t=0$ is
\begin{align*}
\phi'(0)=&a_{12}\tilde{\eta}^2\otimes Z_1+a_{13}\tilde{\eta}^3\otimes Z_1+a_{21}\tilde{\eta}^1\otimes Z_2\\
&+a_{23}\tilde{\eta}^3\otimes Z_2+a_{31}\tilde{\eta}^1\otimes Z_3+a_{32}\tilde{\eta}^2\otimes Z_3.
\end{align*}
With the aid of (\ref{Nak:struct_eq}), we compute
\begin{align*}
\del\circ i_{\phi'(0)}(\omega^2)&=[a_{12}(i\alpha_{1\overline{1}}\alpha_{2\overline{3}}+\overline{\alpha}_{1\c{2}}\alpha_{1\c{3}})+a_{32}(i\alpha_{3\overline{3}}\overline{\alpha}_{1\c{2}}-\c{\alpha}_{1\c{3}}\alpha_{2\c{3}})]e^{\c{z}^1-z^1}\eta^{12}\wedge\eta^{\tilde{1}\tilde{2}\tilde{3}}\\
&+\frac{1}{2}\left[a_{31}(|\alpha_{2\c{3}}|^2-\alpha_{2\c{2}}\alpha_{3\c{3}})\right]\eta^{12}\wedge\eta^{\tilde{1}\tilde{2}\tilde{3}}\\
&+[a_{13}(\alpha_{1\c{2}}\c{\alpha}_{1\c{3}}-i\alpha_{1\c{3}}\c{\alpha}_{2\c{3}})+a_{23}(i\alpha_{2\c{2}}\c{\alpha}_{1\c{3}}+\c{\alpha}_{1\c{2}}\c{\alpha}_{2\c{3}})]e^{z^1-\c{z}^1}\eta^{13}\wedge\eta^{\tilde{1}\tilde{2}\tilde{3}}\\
&+\frac{1}{2}[a_{21}(|\alpha_{2\c{3}}|^2-\alpha_{2\c{2}}\alpha_{3\c{3}})]\eta^{13}\wedge\eta^{\tilde{1}\tilde{2}\tilde{3}}.
\end{align*}
We observe that, again, since the forms $e^{\c{z}^1-z^1}\eta^{12}\wedge\eta^{\tilde{1}\tilde{2}\tilde{3}}$ and $e^{z^1-\c{z}^1}\eta^{13}\wedge\eta^{\tilde{1}\tilde{2}\tilde{3}}$ are cohomologous to $0$ in $H_{\delbar}^{2,3}(M)$ and the forms $\eta^{12}\wedge\eta^{\tilde{1}\tilde{2}\tilde{3}}$ and $\eta^{13}\wedge\eta^{\tilde{1}\tilde{2}\tilde{3}}$ are $\delbar$-harmonic, the obstruction from Corollary \ref{cor:main} boils down to
\begin{align*}
&a_{21}(|\alpha_{2\c{3}}|^2-\alpha_{2\c{2}}\alpha_{3\c{3}})=0\\
&a_{31}(|\alpha_{2\c{3}}|^2-\alpha_{2\c{2}}\alpha_{3\c{3}})=0.
\end{align*}
We point out that, since the metric $g$ is Hermitian and, hence, positive definite, the real number $|\alpha_{2\c{3}}|^2-\alpha_{2\c{2}}\alpha_{3\c{3}}$ is strictly positive.
Therefore, there exists no curve of balanced metrics $\{\omega_t\}_{t\in I}$ such that $\omega_0=\omega$ along the curve of deformations $t\mapsto\phi(t)$, if
\begin{align*}
\begin{pmatrix}
a_{21}\\
a_{31}
\end{pmatrix}\neq
\begin{pmatrix}
0\\
0
\end{pmatrix}.
\end{align*}

\emph{Class} $(\ref{Nak:Kur_3})$.
For this class, the $(0,1)$-vector deformation form is
\begin{equation*}
\phi(\mathbf{t})=t_{12}\tilde{\eta}^2\otimes Z_1+t_{22}\tilde{\eta}^2\otimes Z_2+t_{32}\tilde{\eta}^2\otimes Z_3+t_{33}\tilde{\eta}^3\otimes Z_3,
\end{equation*}
for $\mathbf{t}=(t_{12},t_{22},t_{32},t_{33})\in B(0,\delta)\subset\C^4$, $\delta>0.$ We consider the smooth curve of deformations
\begin{align*}
t\mapsto\phi(t):=t(a_{12}\tilde{\eta}^2\otimes Z_1+a_{22}\tilde{\eta}^2\otimes Z_2+a_{32}\tilde{\eta}^2\otimes Z_3+a_{33}\tilde{\eta}^3\otimes Z_3),
\end{align*}
for $t\in I=(-\epsilon,\epsilon),\epsilon>0$, whose derivative in $t=0$ is
\begin{align*}
\phi'(0)=a_{12}\tilde{\eta}^2\otimes Z_1+a_{22}\tilde{\eta}^2\otimes Z_2+a_{32}\tilde{\eta}^2\otimes Z_3+a_{33}\tilde{\eta}^3\otimes Z_3.
\end{align*}
In this case, $\del\circ i_{\phi'(0)}(\omega^2)=0$, therefore Corollary \ref{cor:main} gives no obstruction to the existence of smooth curves of balanced metrics $\{\omega_t\}_{t\in I}$ such that $\omega_0=\omega$ along the curve of deformations $t\mapsto \phi(t)$.
Moreover, if $\{\omega_t\}_{t\in I}$ is any smooth curve of left invariant Hermitian metrics along $\phi(t)$ such that $\omega_0=\omega$, we can see that $\delbar(\omega^2(0))'=0$, where we have set $\omega_t=e^{i_{\phi(t)}|i_{\overline{\phi(t)}}}\omega(t)$, for $\omega(t)=\omega_{ij}(t)dz^i\wedge d\overline{z}^j\in\mathcal{A}^{1,1}(M)$. Therefore, also Theorem \ref{thm:main} yields no obstruction.

\emph{Class} $(\ref{Nak:Kur_4})$.
The $(0,1)$-vector form for this class is
\begin{equation*}
\phi(\mathbf{t})=t_{13}\tilde{\eta}^3\otimes Z_1+t_{22}\tilde{\eta}^2\otimes Z_2+t_{23}\tilde{\eta}^3\otimes Z_2+t_{33}\tilde{\eta}^3\otimes Z_3,
\end{equation*}
for $\mathbf{t}=(t_{13},t_{22},t_{23},t_{33})\in B(0,\delta)\subset\C^4$, $\delta>0$.

Let us consider the smooth curve of deformations
\begin{align*}
t\mapsto \phi(t):=t(a_{13}\tilde{\eta}^3\otimes Z_1+a_{22}\tilde{\eta}^2\otimes Z_2+a_{23}\tilde{\eta}^3\otimes Z_2+a_{33}\tilde{\eta}^3\otimes Z_3)
\end{align*}
for $t\in(-\epsilon,\epsilon)$ and its derivative in $t=0$
\begin{align*}
\phi'(0)=a_{13}\tilde{\eta}^3\otimes Z_1+a_{22}\tilde{\eta}^2\otimes Z_2+a_{23}\tilde{\eta}^3\otimes Z_2+a_{33}\tilde{\eta}^3\otimes Z_3.
\end{align*}
Also in this case, $\del\circ i_{\phi'(0)}(\omega^2)=0$, i.e., Corollary \ref{cor:main} yields no obstruction and analogously to the previous class, also Theorem \ref{thm:main} yields no non-trivial conditions.

We can focus now on the other case.

\subsubsection{Case (ii): $c\notin 2\pi\Z$ or $d\notin 2\pi\Z$}

In \cite[Section 3]{Nak}, it is shown that $H_{\delbar}^{0,1}(M)=\C\langle\overline{\eta}^1\rangle$, and any small deformation of $(M,J)$ can be parametrized by the $(0,1)$-vector form
\begin{align*}
\phi(\mathbf{t}):=t_1\overline{\eta}^1\otimes Z_1+t_2\overline{\eta}^1\otimes Z_2+t_3\overline{\eta}^1\otimes Z_3,
\end{align*}
with $\mathbf{t}=(t_1,t_2,t_3)\in B(0,\delta)\subset\C^3$, $\delta>0$.
We can then consider the smooth curve of deformations
\begin{align*}
t\mapsto \phi(t):=t(a_1\overline{\eta}^1\otimes Z_1+a_2\overline{\eta}^1\otimes Z_2+a_3\overline{\eta}^1\otimes Z_3),
\end{align*}
for $t\in(-\epsilon,\epsilon)$, $\epsilon>0$, $(a_1,a_2,a_3)\in\C^3$, whose derivative in $t=0$ is
\begin{align*}
\phi'(0)=a_1\overline{\eta}^1\otimes Z_1+a_2\overline{\eta}^1\otimes Z_2+a_3\overline{\eta}^1\otimes Z_3.
\end{align*}

By making use of (\ref{Nak:struct_eq}), we  compute
\begin{align*}
\del\circ i_{\phi'(0)}(\omega^2)=\frac{1}{2}(a_2(|\alpha_{2\c{3}}|^2-\alpha_{2\c{2}}\alpha_{3\c{3}})+a_1(i\alpha_{3\c{3}}\alpha_{1\c{2}}+\alpha_{1\c{3}}\c{\alpha}_{2\c{3}}))\eta^{13\overline{123}}\\
+\frac{1}{2}(a_3(|\alpha_{2\c{3}}|^2-\alpha_{2\c{2}}\alpha_{3\c{3}})+a_1(i\alpha_{2\c{2}}\alpha_{1\c{3}}-\alpha_{1\c{2}}\alpha_{2\c{3}}))\eta^{12\c{123}}.
\end{align*}
We can easily verify that $\delbar\eta^{12\c{123}}=\delbar^*\eta^{12\c{123}}=\delbar\eta^{13\c{123}}=\delbar^*\eta^{13\c{123}}=0$, i.e., the $(2,3)$-forms $\eta^{12\c{123}}$ and $\eta^{13\overline{123}}$ are $\delbar-$harmonic. Therefore, the Dolbeault cohomology classes $[\eta^{12\c{123}}]_{H_{\delbar}^{2,3}(M)}$ and $[\eta^{13\overline{123}}]_{H_{\delbar}^{2,3}(M)}$ are not vanishing. On this accounts, Corollary \ref{cor:main} implies that if there exists a smooth curve of balanced metrics $\{\omega_t\}_{t\in I}$ along the smooth curve of deformations $t\mapsto \phi(t)$, then we must have that
\begin{align}
\begin{cases}\label{Nakcs_cond}
a_2(|\alpha_{2\c{3}}|^2-\alpha_{2\c{2}}\alpha_{3\c{3}})+a_1(i\alpha_{3\c{3}}\alpha_{1\c{2}}+\alpha_{1\c{3}}\c{\alpha}_{2\c{3}})=0\\
a_3(|\alpha_{2\c{3}}|^2-\alpha_{2\c{2}}\alpha_{3\c{3}})+a_1(i\alpha_{2\c{2}}\alpha_{1\c{3}}-\alpha_{1\c{2}}\alpha_{2\c{3}})=0.
\end{cases}
\end{align}
We notice that, if $a_1=0$, i.e., $\phi'(0)=a_2\overline{\eta}^1\otimes Z_2+a_3\overline{\eta}^1\otimes Z_3$, condition (\ref{Nakcs_cond}) becomes
\begin{align*}
\begin{cases}
a_2=0 \\
a_3=0\\
\end{cases}
\end{align*}
since $|\alpha_{2\c{3}}|^2-\alpha_{2\c{2}}\alpha_{3\c{3}}\neq 0$, being $g$ a Hermitian metric. Hence, by Corollary \ref{cor:main}, we can conclude that there exists no smooth curve of balanced metrics $\{\omega_t\}_{t\in I}$ such that $\omega_0=\omega$, along $\phi(t)$   with $\phi'(0)=a_2\overline{\eta}^1\otimes Z_2+a_3\overline{\eta}^1\otimes Z_3$. 

Viceversa, let us consider the case in which $a_1\neq 0$ and at least one between $a_2$ and $a_3$ vanishes, i.e., for example, $a_2=0$. Then, condition (\ref{Nakcs_cond}) reduces to
\begin{align*}
\begin{cases}
a_1=0\\
a_3(|\alpha_{2\c{3}}|^2-\alpha_{2\c{2}}\alpha_{3\c{3}})+a_1(i\alpha_{2\c{2}}\alpha_{1\c{3}}-\alpha_{1\c{2}}\alpha_{2\c{3}})=0,
\end{cases}
\end{align*}
since the term $i\alpha_{2\c{2}}\alpha_{1\c{3}}-\alpha_{1\c{2}}\alpha_{2\c{3}}\neq 0$, being $g$ a Hermitian metric. We assumed $a_1\neq 0$, therefore by Corollary \ref{cor:main}, there exists no smooth curve of balanced metrics $\{\omega_t\}_{t\in I}$ such that $\omega_0=\omega$, along the smooth curve of deformations $\phi(t)$ with $\phi'(0)=a_1\overline{\eta}^1\otimes Z_1+a_3\overline{\eta}^1\otimes Z_3$. We come to the same conclusion if we consider $a_3=0$.

We can then summarize what we obtained in the following theorems.
\begin{thm}
Let $(M,J)$ be the complex parallelisable Nakamura manifold with $\dim H_{\delbar}^{0,1}(M)=3$, where $J$ is the integrable left-invariant almost-complex structure induced by the left-invariant coframe $\{\eta^1,\eta^2,\eta^3\}$ with structure equations
\begin{align*}
\begin{cases}
d\eta^1=0\\
d\eta^2=-\eta^{12}\\
d\eta^3=\eta^{13}.
\end{cases}
\end{align*}
Let $g$ be any left-invariant Hermitian (balanced) metric with associated fundamental form
\[
\omega=\frac{i}{2}\sum_{j=1}^3\alpha_{j\overline{j}}\eta^{j\overline{j}}+\frac{1}{2}\sum_{j<k}(\alpha_{j\overline{k}}-\overline{\alpha}_{j\overline{k}})\eta^{j\overline{k}}.
\]
Defining the left-invariant $(0,1)$-forms $\{\tilde{\eta}^1,\tilde{\eta}^2,\tilde{\eta}^3\}$ by
\begin{align*}
&\tilde{\eta}^1:=\overline{\eta}^1\\
&\tilde{\eta}^2:=e^{\c{z}^1-z^1}\overline{\eta}^2\\
&\tilde{\eta}^3:=e^{z^1-\c{z}^1}\overline{\eta}^3,
\end{align*}
let $t\mapsto\phi(t):=t\sum_{i,j=1}^3a_{ij}\tilde{\eta}^j\otimes Z_i\in\mathcal{A}^{0,1}(T^{1,0}(M))$ be a smooth curve of deformations of $(M,J)$, for $\{a_{ij}\}_{i,j=1}^3\subset\C$, $t\in I=(-\epsilon,\epsilon)$, $\epsilon>0$.

Then, 
\begin{itemize}
\item if $a_{11}\neq0,a_{12}=a_{13}=a_{23}=a_{32}=0$, there exists no smooth curve of balanced metrics $\{\omega_t\}_{t\in I}$ such that $\omega_0=\omega$, along the curve of deformation $t\mapsto\phi(t)$, if
\begin{align*}
\begin{pmatrix}
a_{11}(i\alpha_{2\c{2}}\alpha_{1\c{3}}-\alpha_{1\c{2}}\alpha_{2\c{3}})+a_{31}(|\alpha_{2\c{3}}|^2-\alpha_{2\c{2}}\alpha_{3\c{3}})=0\\
a_{11}(i\alpha_{1\c{1}}\alpha_{1\c{2}}+\alpha_{1\c{3}}\c{\alpha}_{2\c{3}})+a_{21}(|\alpha_{2\c{3}}|^2-\alpha_{2\c{2}}\alpha_{3\c{3}})=0
\end{pmatrix}\neq
\begin{pmatrix}
0\\
0
\end{pmatrix};
\end{align*}
\item if $a_{11}=a_{22}=a_{33}=0$, there exists no smooth curve of balanced metrics $\{\omega_t\}_{t\in I}$ such that $\omega_0=\omega$, along the curve of deformation $t\mapsto\phi(t)$, if
\begin{align*}
\begin{pmatrix}
a_{21}\\
a_{31}
\end{pmatrix}\neq
\begin{pmatrix}
0\\
0
\end{pmatrix}.
\end{align*}
\end{itemize}
\end{thm}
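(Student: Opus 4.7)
The plan is to invoke Corollary~\ref{cor:main} directly and translate the vanishing of $[\del\circ i_{\phi'(0)}(\omega^{n-1})]_{H_{\delbar}^{n-1,n}(M)}$ into explicit linear conditions on the coefficients $a_{ij}$, for $n=3$. So for each of the two cases stated (classes~(\ref{Nak:Kur_1}) and~(\ref{Nak:Kur_2})), I first write down $\phi'(0)$ from the defining constraints on the vector $\mathbf{a}=(a_{ij})$, then compute $i_{\phi'(0)}\omega^{2}$ using the formula for the contraction $i_\phi$ on $(p,q)$-forms from Section~\ref{notations}, and finally apply $\del$ using the structure equations~(\ref{Nak:struct_eq}). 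This part is entirely mechanical: since $d\eta^{1}=0$, $d\eta^{2}=-\eta^{12}$, $d\eta^{3}=\eta^{13}$, every application of $\del$ introduces at most factors of $\eta^{12}$ or $\eta^{13}$, so the result will be a linear combination of $(2,3)$-forms built from $\eta^{12},\eta^{13}$ and the dual coframe $\{\tilde{\eta}^{1},\tilde{\eta}^{2},\tilde{\eta}^{3}\}$.

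The second step is the cohomological analysis. Among the $(2,3)$-forms appearing in $\del\circ i_{\phi'(0)}(\omega^{2})$, one has to separate those which are $\delbar$-exact from those which are $\delbar$-harmonic. The relations
\begin{align*}
e^{\c{z}^1-z^1}\eta^{12}\wedge\eta^{\tilde{1}\tilde{2}\tilde{3}}&=\delbar\bigl(e^{\c{z}^1-z^1}\eta^{12}\wedge\tilde{\eta}^{23}\bigr),\\
e^{z^1-\c{z}^1}\eta^{13}\wedge\eta^{\tilde{1}\tilde{2}\tilde{3}}&=\delbar\bigl(-e^{z^1-\c{z}^1}\eta^{13}\wedge\tilde{\eta}^{23}\bigr),
\end{align*}
show that the forms weighted by the exponentials $e^{\pm(z^{1}-\c{z}^{1})}$ are $\delbar$-exact and hence can be discarded when passing to cohomology. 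On the other hand, a direct check (using the structure equations together with the formula $\delbar^{\ast}=-\ast\delbar\ast$ on a unimodular solvmanifold with this invariant metric) shows that $\eta^{12}\wedge\eta^{\tilde{1}\tilde{2}\tilde{3}}$ and $\eta^{13}\wedge\eta^{\tilde{1}\tilde{2}\tilde{3}}$ are $\delbar$-closed and $\delbar^{\ast}$-closed, hence $\delbar$-harmonic, and thus represent two linearly independent classes in $H_{\delbar}^{2,3}(M)$.

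The third step is to read off the obstruction. Writing $\del\circ i_{\phi'(0)}(\omega^{2})$ as $A\cdot\eta^{12}\wedge\eta^{\tilde{1}\tilde{2}\tilde{3}}+B\cdot\eta^{13}\wedge\eta^{\tilde{1}\tilde{2}\tilde{3}}+(\text{$\delbar$-exact})$, Corollary~\ref{cor:main} forces $A=B=0$. For class~(\ref{Nak:Kur_1}) this yields the pair of linear equations displayed in the statement; for class~(\ref{Nak:Kur_2}) the simpler pair $a_{21}(|\alpha_{2\c{3}}|^{2}-\alpha_{2\c{2}}\alpha_{3\c{3}})=0$ and $a_{31}(|\alpha_{2\c{3}}|^{2}-\alpha_{2\c{2}}\alpha_{3\c{3}})=0$, which reduces to $a_{21}=a_{31}=0$ since positive-definiteness of $g$ guarantees $|\alpha_{2\c{3}}|^{2}-\alpha_{2\c{2}}\alpha_{3\c{3}}\neq 0$. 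The contrapositive of each then gives the non-existence statements in the theorem.

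The main obstacle, and the only genuinely nontrivial point beyond bookkeeping, is the verification that $\eta^{12}\wedge\eta^{\tilde{1}\tilde{2}\tilde{3}}$ and $\eta^{13}\wedge\eta^{\tilde{1}\tilde{2}\tilde{3}}$ are $\delbar$-harmonic and therefore define nonzero cohomology classes: without this, Corollary~\ref{cor:main} could be vacuous on these terms. The computation is delicate because $\tilde{\eta}^{2},\tilde{\eta}^{3}$ involve the exponential factors $e^{\c{z}^{1}-z^{1}}$ and $e^{z^{1}-\c{z}^{1}}$ whose derivatives interact with $\delbar^{\ast}$ through the invariant Hodge star of $g$; one checks that the contributions cancel precisely because of the choice $b,d\in 2\pi\Z$, which is what makes $\dim H_{\delbar}^{0,1}(M)=3$ in the first place.
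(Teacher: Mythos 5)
Your proposal is correct and follows essentially the same route as the paper's own argument: invoke Corollary \ref{cor:main}, compute $\del\circ i_{\phi'(0)}(\omega^{2})$ from the structure equations (\ref{Nak:struct_eq}), discard the exponential-weighted terms via the two explicit $\delbar$-primitives you quote, and equate to zero the coefficients of the $\delbar$-harmonic representatives $\eta^{12}\wedge\eta^{\tilde{1}\tilde{2}\tilde{3}}$ and $\eta^{13}\wedge\eta^{\tilde{1}\tilde{2}\tilde{3}}$, using $|\alpha_{2\c{3}}|^{2}-\alpha_{2\c{2}}\alpha_{3\c{3}}\neq 0$ in the second class. The only point worth noting is that the harmonicity check you single out as delicate is actually elementary: since $e^{\c{z}^{1}-z^{1}}e^{z^{1}-\c{z}^{1}}=1$ one has $\eta^{\tilde{1}\tilde{2}\tilde{3}}=\overline{\eta}^{123}$, so the two forms are the left-invariant forms $\eta^{12\overline{123}}$ and $\eta^{13\overline{123}}$ and the verification reduces to linear algebra on invariant forms, with no interaction between the exponentials and $\delbar^{\ast}$.
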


\begin{thm}
Let $(M,J)$ be the complex parallelisable Nakamura manifold with $\dim H_{\del}^{0,1}(M)=1$, where $J$ is the integrable left-invariant almost-complex structure induced by the left-invariant coframe $\{\eta^1,\eta^2,\eta^3\}$ with structure equations
\begin{align*}
\begin{cases}
d\eta^1=0\\
d\eta^2=-\eta^{12}\\
d\eta^3=\eta^{13}.
\end{cases}
\end{align*}
Let $g$ be any left-invariant Hermitian (balanced) metric with associated fundamental form
\[
\omega=\frac{i}{2}\sum_{j=1}^3\alpha_{j\overline{j}}\eta^{j\overline{j}}+\frac{1}{2}\sum_{j<k}(\alpha_{j\overline{k}}-\overline{\alpha}_{j\overline{k}})\eta^{j\overline{k}}.
\]
Let $t\mapsto\phi(t):=t\sum_{i}^3a_{i}\overline{\eta}^1\otimes Z_i\in\mathcal{A}^{0,1}(T^{1,0}(M))$ be a smooth curve of deformations of $(M,J)$, for $0\neq(a_1,a_2,a_3)\in\C^3$, $t\in I=(-\epsilon,\epsilon)$, $\epsilon>0$.

Then, there exists no smooth curve of balanced metrics $\{\omega_t\}_{t\in I}$ such that $\omega_0=\omega$, along the curve of deformation $t\mapsto\phi(t)$, if
\begin{align*}
\begin{pmatrix}
a_2(|\alpha_{2\c{3}}|^2-\alpha_{2\c{2}}\alpha_{3\c{3}})+a_1(i\alpha_{3\c{3}}\alpha_{1\c{2}}+\alpha_{1\c{3}}\c{\alpha}_{2\c{3}})\\
a_3(|\alpha_{2\c{3}}|^2-\alpha_{2\c{2}}\alpha_{3\c{3}})+a_1(i\alpha_{2\c{2}}\alpha_{1\c{3}}-\alpha_{1\c{2}}\alpha_{2\c{3}})
\end{pmatrix}\neq
\begin{pmatrix}
0\\
0
\end{pmatrix}.
\end{align*}

In particular, if one the following holds:
\begin{itemize}
\item $a_1=0$;
\item $a_1\neq 0$, $(a_2,a_3)\in\{(a_2,0),(0,a_3)\}$,
\end{itemize}
there exists no smooth curve of balanced metrics $\{\omega_t\}_{t\in I}$ such that $\omega_0=\omega$, along the curve of deformation $t\mapsto\phi(t)$.

\end{thm}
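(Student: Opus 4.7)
The plan is to apply Corollary \ref{cor:main} directly in the case $n=3$: the existence of a smooth curve of balanced metrics $\{\omega_{t}\}_{t\in I}$ with $\omega_{0}=\omega$ along $\phi(t)$ forces the vanishing of the Dolbeault class
\[
\bigl[\del\circ i_{\phi'(0)}(\omega^{2})\bigr]_{H^{2,3}_{\delbar}(M)}=0.
\]
Since $\phi'(0)=a_{1}\overline{\eta}^{1}\otimes Z_{1}+a_{2}\overline{\eta}^{1}\otimes Z_{2}+a_{3}\overline{\eta}^{1}\otimes Z_{3}$ is left-invariant and $\omega$ is given in the left-invariant coframe $\{\eta^{1},\eta^{2},\eta^{3}\}$, the computation of $\omega^{2}$, of its contraction with $\phi'(0)$, and of the subsequent $\del$-derivative can be carried out entirely at the Lie-algebra level via the structure equations (\ref{Nak:struct_eq}).

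The key step is to perform this computation. After expanding $\omega^{2}$ in the basis $\eta^{ij\overline{k\ell}}$, the contraction $i_{\phi'(0)}$ substitutes $a_{i}\overline{\eta}^{1}$ for $\eta^{i}$ in each summand; then $\del$ acts using $\del\eta^{1}=0$, $\del\eta^{2}=-\eta^{12}$, $\del\eta^{3}=\eta^{13}$. Collecting terms, the outcome must coincide with the formula already obtained in Case (ii) above, namely
\[
\del\circ i_{\phi'(0)}(\omega^{2})=\tfrac{1}{2}\,C_{2}\,\eta^{12\overline{123}}+\tfrac{1}{2}\,C_{3}\,\eta^{13\overline{123}},
\]
where $C_{2}$ and $C_{3}$ are exactly the two scalar expressions appearing in the theorem statement.

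Next I would verify that the $(2,3)$-forms $\eta^{12\overline{123}}$ and $\eta^{13\overline{123}}$ are $\delbar$-harmonic for any left-invariant Hermitian $g$: $\delbar$-closedness is immediate from (\ref{Nak:struct_eq}) since every term produced contains a repeated antiholomorphic factor, while $\delbar^{\ast}$-closedness follows by the standard Hodge-star computation in the left-invariant coframe. Hence their classes in $H^{2,3}_{\delbar}(M)$ are nonzero and linearly independent, so the cohomological obstruction reduces to the scalar system $C_{2}=C_{3}=0$. Negating this system gives the first (and main) assertion of the theorem.

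For the special cases, the plan is to exploit the positive definiteness of the matrix of $g$. The principal minor on indices $(2,3)$ is positive, so $\Lambda:=|\alpha_{2\overline{3}}|^{2}-\alpha_{2\overline{2}}\alpha_{3\overline{3}}\neq 0$. If $a_{1}=0$, then $C_{2}=C_{3}=0$ reduce to $a_{2}\Lambda=a_{3}\Lambda=0$, which forces $a_{2}=a_{3}=0$, contradicting $(a_{1},a_{2},a_{3})\neq 0$. If instead $a_{1}\neq 0$ and exactly one of $a_{2},a_{3}$ vanishes, then the corresponding equation $C_{j}=0$ reduces to $a_{1}\mu=0$, where $\mu$ is one of the metric factors $i\alpha_{2\overline{2}}\alpha_{1\overline{3}}-\alpha_{1\overline{2}}\alpha_{2\overline{3}}$ or $i\alpha_{3\overline{3}}\alpha_{1\overline{2}}+\alpha_{1\overline{3}}\overline{\alpha}_{2\overline{3}}$, each of which is nonzero by the Hermitian character of $g$; hence $a_{1}=0$, a contradiction. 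The main obstacle is precisely the nontriviality of these latter factors, which, unlike $\Lambda$, is not a direct consequence of a single principal minor being positive and requires inspecting the appropriate off-diagonal entries of the positive definite Hermitian form $g$.
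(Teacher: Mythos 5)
Your proposal is correct and follows essentially the same route as the paper: apply Corollary \ref{cor:main}, compute $\del\circ i_{\phi'(0)}(\omega^2)$ at the Lie-algebra level from the structure equations, check that $\eta^{12\overline{123}}$ and $\eta^{13\overline{123}}$ are $\delbar$-harmonic and hence represent independent nonzero Dolbeault classes, and then specialize the resulting $2\times 2$ linear system. The one step you flag as an obstacle --- the nonvanishing of $i\alpha_{3\overline{3}}\alpha_{1\overline{2}}+\alpha_{1\overline{3}}\overline{\alpha}_{2\overline{3}}$ and $i\alpha_{2\overline{2}}\alpha_{1\overline{3}}-\alpha_{1\overline{2}}\alpha_{2\overline{3}}$ --- is asserted in the paper with no more justification than ``being $g$ a Hermitian metric,'' so you supply everything the paper does; be aware, though, that for a diagonal metric ($\alpha_{1\overline{2}}=\alpha_{1\overline{3}}=\alpha_{2\overline{3}}=0$) these quantities vanish, so that branch of the ``in particular'' clause is not actually forced by positive definiteness in either your argument or the paper's.
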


\subsection{Example 2}{\em (Iwasawa manifold).}
Let $G=H(3;\C)$ be the $3$-dimensional complex Heisenberg group.
It well known that $G$ is a $2$-step nilpotent Lie group. Let us consider the lattice $\Gamma:=H(3,\Z[i])$ of $G$, i.e., $\Gamma=H(3;\C)\cap \GL(3;\Z[i])$. The quotient $M:=\Gamma/G$ is a compact manifold, known as the \emph{Iwasawa manifold}. In particular, $M$ is a $3$-dimensional $2$-step complex nilmanifold with universal covering $\C^3$.

If $\{z^1,z^2,z^3\}$ are the standard coordinates on $\C^3$, the forms $\{\eta^1,\eta^2,\eta^3\}$, defined by
\begin{align*}
\begin{cases}
\eta^1=dz^1\\
\eta^2=dz^2\\
\eta^3=dz^3-z^1dz^2,
\end{cases}
\end{align*}
are a left-invariant coframe of $(1,0)$-forms on $G$, therefore they descend to the quotient $M$. The dual frame of $(1,0)$-vector fields $\{Z_1,Z_2,Z_3\}$ on $G$ has local expression
\begin{align*}
\begin{cases}
Z_1=\frac{\del}{\del z^1}+z^1\frac{\del}{\del z^3}\\
Z_2=\frac{\del}{\del z^2}\\
Z_3=\frac{\del}{\del z^3}.
\end{cases}
\end{align*}
We notice that, by looking at structure equations
\begin{align}\label{Iwa:struct_eq}
\begin{cases}
d\eta^1=0\\
d\eta^2=0\\
d\eta^3=-\eta^{12},
\end{cases}
\end{align}
the coframe $\{\eta^1,\eta^2,\eta^3\}$ induces a left invariant almost-complex structure $J$ on $M$, which is integrable.

Let $g$ be any left-invariant Hermitian metric on $(M,J)$. Its associated fundamental form $\omega$ can be written as
\begin{align*}
\omega=\frac{i}{2}\sum_{j=1}^3\alpha_{j\overline{j}}\eta^{j\overline{j}}+\frac{1}{2}\sum_{j<k}(\alpha_{j\overline{k}}-\overline{\alpha}_{j\overline{k}})\eta^{j\overline{k}},
\end{align*}
with complex numbers $\{\alpha_{j\overline{k}}\}_{j,k=1}^3$ such that the matrix respresenting $g$
\begin{equation*}
\begin{pmatrix}
\alpha_{1\overline{1}} & -i\alpha_{1\overline{2}} & -i\alpha_{1\overline{3}} \\
i\overline{\alpha}_{1\overline{2}} & \alpha_{2\overline{2}} & -i\alpha_{2\overline{3}} \\
i\overline{\alpha}_{1\overline{3}} & i\overline{\alpha}_{2\overline{3}} & \alpha_{3\overline{3}}
\end{pmatrix}
\end{equation*}
is positive definite. By structure equations (\ref{Iwa:struct_eq}), it is easy to check that $\delbar\omega^2=0$, i.e., the left-invariant Hermitian metric $g$ is balanced. 

In \cite{Nak}, Nakamura gives a complete description of Kuranishi space of the Iwasawa manifold. In particular, any small deformation of $(M,J)$ can be parametrized by the $(0,1)$-vector form
\begin{align*}
\phi(\mathbf{t})=\sum_{i=1}^3\sum_{j=1}^2t_{ij}\overline{\eta}^j\otimes Z_i - (t_{11}t_{22}-t_{12}t_{21})\overline{\eta}^3\otimes Z_3,
\end{align*}
with $\mathbf{t}=(t_{11},t_{12},t_{21},t_{22},t_{31},t_{32})\in B(0,\delta)\subset\C^6$, $\delta>0$.

Let us consider the smooth curve of deformations
\begin{align*}
t\mapsto \phi(t):=&t(a_{11}\overline{\eta}^1\otimes Z_1+a_{12}\overline{\eta}^2\otimes Z_1+a_{21}\overline{\eta}^1\otimes Z_2+a_{22}\overline{\eta}^2\otimes Z_2+a_{31}\overline{\eta}^1\otimes Z_3\\
&+a_{32}\overline{\eta}^2\otimes Z_3)
-t^2(a_{11}a_{22}-a_{12}a_{21})\overline{\eta}^3\otimes Z_3\in\mathcal{A}^{0,1}(T^{1,0}(M)),
\end{align*}
with $t\in I=(-\epsilon,\epsilon)$, $\epsilon>0$ and $(a_{11},a_{12},a_{21},a_{22},a_{31},a_{32})\in\C^6$. Its derivative in $t=0$ is
\begin{align*}
\phi'(0)=a_{11}\overline{\eta}^1\otimes Z_1+a_{12}\overline{\eta}^2\otimes Z_1+a_{21}\overline{\eta}^1\otimes Z_2+a_{22}\overline{\eta}^2\otimes Z_2+a_{31}\overline{\eta}^1\otimes Z_3a_{32}\overline{\eta}^2\otimes Z_3.
\end{align*}
With the aid of structure equations (\ref{Iwa:struct_eq}), we compute
\begin{align*}
\del\circ i_{\phi'(0)}(\omega^2)=&\frac{1}{2}\Big(a_{12}(|\alpha_{1\c{3}}|^2-\alpha_{1\c{1}}\alpha_{3\c{3}})+a_{21}(\alpha_{2\c{2}}\alpha_{3\c{3}}-|\alpha_{2\c{3}}|^2)\\
&-a_{11}(i\alpha_{3\c{3}}\alpha_{1\c{2}}+\alpha_{1\c{3}}\overline{\alpha}_{2\c{3}})+a_{22}(-i\alpha_{3\c{3}}\c{\alpha}_{1\c{3}}+\c{\alpha}_{1\c{3}}\alpha_{2\c{3}})\Big)\eta^{12\c{123}}.
\end{align*}
We notice that the $(2,3)$-form $\eta^{12\c{123}}$ is both $\delbar$-closed and $\delbar^*$-closed, i.e., it is $\delbar$-harmonic. Hence, the corresponding Dolbeault class $[\eta^{12\c{123}}]_{\delbar}$ is non-vanishing in $H_{\delbar}^{2,3}(M)$.
Applying Corollary \ref{cor:main}, we see that there exists no smooth curve of balanced metrics $\{\omega_t\}_{t\in I}$ along the curve of deformations $t\mapsto\phi(t)$, such that $\omega_0=\omega$, if the following equation holds
\begin{align*}
a_{12}(|\alpha_{1\c{3}}|^2-\alpha_{1\c{1}}\alpha_{3\c{3}})+a_{21}(\alpha_{2\c{2}}\alpha_{3\c{3}}-|\alpha_{2\c{3}}|^2)-a_{11}(i\alpha_{3\c{3}}\alpha_{1\c{2}}+\alpha_{1\c{3}}\overline{\alpha}_{2\c{3}})+a_{22}(-i\alpha_{3\c{3}}\c{\alpha}_{1\c{3}}+\c{\alpha}_{1\c{3}}\alpha_{2\c{3}})\neq 0.
\end{align*}
We observe that, for $a_{ij}=0$ for $(i,j)\neq (1,2)$, we find the same curve of deformations that Alessandrini and Bassanelli costructed in \cite{AB90} to prove the non stability of the balanced condition under small deformations of the complex structure.

We gather what we have obtained in the following theorem.
\begin{thm}
Let $(M,J)$ be the Iwasawa manifold with integrable left-invariant complex structure $J$, induced by the left-invariant coframe $\{\eta^1,\eta^2,\eta^3\}$ with structure equations
\begin{align*}
\begin{cases}
d\eta^1=0\\
d\eta^2=0\\
d\eta^3=-\eta^{12}.
\end{cases}
\end{align*}
Let $g$ be a left-invariant Hermitian (balanced) metric on $(M,J)$ with associated fundamental form
\begin{align*}
\omega=\frac{i}{2}\sum_{j=1}^3\alpha_{j\overline{j}}\eta^{j\overline{j}}+\frac{1}{2}\sum_{j<k}(\alpha_{j\overline{k}}-\overline{\alpha}_{j\overline{k}})\eta^{j\overline{k}}.
\end{align*}
Let $t\mapsto\phi(t)=t(\sum_{i=1}^3\sum_{j=1}^2a_{ij}\overline{\eta}^j\otimes Z_j)-t^2(a_{11}a_{22}-a_{12}a_{21})\overline{\eta}^3\otimes Z_3\in\mathcal{A}^{0,1}(T^{1,0}(M))$ be a smooth curve of deformations of $(M,J)$, with $\tensor*{\{a_{ij}\}}{_{i=1}^3_{j=1}^{\,\,\,\,\,\,2}}\subset\C$, $t\in I=(-\epsilon,\epsilon)$, $\epsilon>0$.

Then, if the following condition holds
\begin{align*}
a_{12}(|\alpha_{1\c{3}}|^2-\alpha_{1\c{1}}\alpha_{3\c{3}})+a_{21}(\alpha_{2\c{2}}\alpha_{3\c{3}}-|\alpha_{2\c{3}}|^2)-a_{11}(i\alpha_{3\c{3}}\alpha_{1\c{2}}+\alpha_{1\c{3}}\overline{\alpha}_{2\c{3}})+a_{22}(-i\alpha_{3\c{3}}\c{\alpha}_{1\c{3}}+\c{\alpha}_{1\c{3}}\alpha_{2\c{3}})\neq 0,
\end{align*}
there exists no smooth curve of balanced metrics $\{\omega_t\}_{t\in I}$ such that $\omega_0=\omega$ along the curve of deformations $t\mapsto\phi(t)$.
\end{thm}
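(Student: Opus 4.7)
The plan is to apply Corollary \ref{cor:main} and reduce the theorem to showing that, under the stated non-vanishing hypothesis, the Dolbeault class $\left[\del\circ i_{\phi'(0)}(\omega^{2})\right]_{H_{\delbar}^{2,3}(M)}$ is nonzero. Since the given curve $\phi(t)$ depends linearly on $t$ up to a $t^{2}$-term in the $\overline{\eta}^3\otimes Z_3$ direction, its derivative at zero is
\[
\phi'(0)=\sum_{i=1}^{3}\sum_{j=1}^{2}a_{ij}\,\overline{\eta}^j\otimes Z_i,
\]
so only the linear coefficients $a_{ij}$ with $j\in\{1,2\}$ enter the first-order obstruction.

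Next I would compute $\omega^{2}$ explicitly in the left-invariant basis, using the given expression for $\omega$, and then contract by $\phi'(0)$ term by term; this produces a left-invariant $(1,3)$-form whose coefficients are quadratic in the $\alpha_{j\c k}$ and linear in the $a_{ij}$. Applying $\del$ is then a short exercise thanks to the structure equations (\ref{Iwa:struct_eq}): only $\del\eta^3=-\eta^{12}$ is nontrivial, so every summand whose unique $(1,0)$-factor is $\eta^1$ or $\eta^2$ dies, and the remaining summands with $(1,0)$-factor $\eta^3$ produce multiples of $\eta^{12\c{123}}$. Collecting the resulting coefficients should reproduce exactly the left-hand side of the displayed non-vanishing condition, times $\eta^{12\c{123}}$.

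The remaining input is to verify that $\eta^{12\c{123}}$ represents a nonzero class in $H_{\delbar}^{2,3}(M)$. It is automatically $\delbar$-closed, since $\delbar\colon\A^{2,3}(M)\to\A^{2,4}(M)=0$; for nontriviality one checks that it is $\delbar$-harmonic with respect to the Hermitian metric defined by the coframe $\{\eta^i\}$, via $\delbar^{\ast}=-\ast\delbar\ast$ and the explicit action of the Hodge star on left-invariant forms. Alternatively, one may invoke Nomizu/Sakane-type results for the Iwasawa nilmanifold to conclude that the nonzero left-invariant $(2,3)$-form $\eta^{12\c{123}}$ cannot be $\delbar$-exact.

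Putting these pieces together, the stated hypothesis ensures $[\del\circ i_{\phi'(0)}(\omega^{2})]_{H_{\delbar}^{2,3}(M)}\neq 0$, which by Corollary \ref{cor:main} rules out the existence of any smooth family $\{\omega_t\}$ of balanced metrics with $\omega_0=\omega$ along $t\mapsto\phi(t)$. The principal technical step is the careful expansion-and-contraction bookkeeping needed to identify the coefficient of $\eta^{12\c{123}}$ with the displayed linear combination in the $a_{ij}$ and $\alpha_{j\c k}$; once that identification is in hand, the harmonicity verification and the appeal to Corollary \ref{cor:main} are immediate.
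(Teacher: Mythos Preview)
Your proposal is correct and follows essentially the same approach as the paper: compute $\phi'(0)$, evaluate $\del\circ i_{\phi'(0)}(\omega^2)$ using the structure equations to obtain a scalar multiple of $\eta^{12\c{123}}$ with the displayed coefficient, observe that $\eta^{12\c{123}}$ is $\delbar$-harmonic (hence nonzero in $H_{\delbar}^{2,3}(M)$), and apply Corollary~\ref{cor:main}. The only minor addition in your write-up is the alternative justification via Nomizu/Sakane-type results for the nontriviality of $[\eta^{12\c{123}}]_{\delbar}$, which the paper does not invoke but which is equally valid.
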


\end{document}